\newtheorem{proposition}{Proposition}[section]
\newtheorem{theorem}[proposition]{Theorem}
\newtheorem{definition}[proposition]{Definition}
\newtheorem{remark}[proposition]{Remark}
\newtheorem{lemma}[proposition]{Lemma}
\newtheorem{corollary}[proposition]{Corollary}
\newcommand{\Uadm}{\mathfrak U}
\newcommand{\Act}{\mathbb{U}}
\newcommand{\Usm}{\mathfrak U_{\mathsf{sm}}}
\newcommand{\Um}{\mathfrak U_{\mathsf{m}}}
\newcommand{\cC}{{\mathcal{C}}}   
\newcommand{\sF}{{\mathfrak{F}}}   
\newcommand{\cJ}{{\mathcal{J}}}  
\newcommand{\sL}{{\mathscr{L}}}  %
\newcommand{\Lp}{{L}}            
\newcommand{\cP}{{\mathcal{P}}}  
\newcommand{\RR}{\mathds{R}}
\newcommand{\Rd}{{\mathds{R}^{d}}}
\DeclareMathOperator{\Exp}{\mathbb{E}}
\newcommand{\D}{\mathrm{d}}
\newcommand{\Sob}{{\mathscr W}}    
\newcommand{\df}{:=}
\newcommand{\transp}{^{\mathsf{T}}}
\DeclareMathOperator*{\trace}{Tr}
\newcommand{\grad}{\nabla}
\newcommand{\abs}[1]{\lvert#1\rvert}
\newcommand{\sy}[1]{{\color{blue} #1}}
\title{Robustness of Optimal Controlled Diffusions with Near-Brownian Noise via Rough Paths Theory}
\author{Somnath Pradhan, Zachary Selk and Serdar Y\"uksel }
\date{\today}
\begin{document}

\maketitle

\begin{abstract}
In many applications, typically only an ideal model is assumed, based on which an optimal control is designed and then applied to the actual system. This gives rise to the problem of performance loss due to the mismatch between the actual system and the assumed system. A particularly important case of this problem arises due to the Brownian idealization of the driving noise in controlled stochastic differential equations.
    In this article we present a robustness theorem for controlled stochastic differential equations driven by approximations of Brownian motion, where the approximations are those that converge to the Brownian under the rough paths topology along sample paths. These approximations include the Wong-Zakai, Karhunen-Lo\`eve, mollified Brownian and fractional Brownian processes, which can be interpreted to be more physical than the Brownian. We establish robustness using rough paths theory, which allows for a pathwise theory of stochastic differential equations. To this end, in particular, we show that within the class of Lipschitz continuous control policies, an optimal solution for the Brownian idealized model is near optimal for a true system driven by a non-Brownian (but near-Brownian) noise. 
\end{abstract}

\section{Introduction}

In stochastic control applications, typically only an ideal model is assumed, or learned from available incomplete data, based on which an optimal control is designed and then applied to the actual system. This gives rise to the problem of performance loss due to the mismatch between the actual system and the assumed system. A robustness problem in this context is to show that the error due to mismatch decreases to zero as the assumed system approaches the actual system. 

This is a problem of major practical importance. Most of the existing works in this direction are concerned with discrete-time Markov decision processes, see for instance \cite{KY-20}, \cite{KRY-20}, \cite{BJP02}, \cite{KV16}, \cite{NG05} \cite{SX15}, and references therein. For such problems in discrete-time, as noted in several explicit examples in \cite[Section 1.3.2]{KY-20}, the modeling and approximation errors can be in the functional representation of controlled transition dynamics, and/or in the distribution of the driving noise process. 

On the other hand, the literature on robustness of stochastic optimal control for continuous-time systems seems to be somewhat limited in their scope, as we detail in the following; for a collection of studies in continuous-time we refer the reader to e.g. \cite{GL99}, \cite{LJE15} \cite{pradhan2022robustness}, \cite{PDPB02a}, \cite{PDPB02b}, \cite{PDPB02c}, \cite{RZ21}. 

Notably, if one considers the following nominal model:
\[d X(t) \,=\, b(X(t),U(t)) dt + \sigma(X(t))dW(t)\, \]
with $X_0=x\in \mathbb{R}$, the uncertainty may be in the drift term $b$, in the diffusion term $\sigma$, or in the noise itself.

This distinction (on the source of modeling errors) turns out to be crucial when one studies robustness in continuous-time. In a recent work, \cite{pradhan2022robustness} studied the robustness problem when the model is perturbed via the changes in both $b$ and $\sigma$. The work established continuity of value functions (optimal cost as a function of initial state) and provided sufficient conditions which ensure robustness/stability (of optimal controls designed under model uncertainties when applied to a true system), under discounted, ergodic and cost-up-to-an-exit-time criteria. The paper utilized regularity properties of solutions to the optimality (HJB) equations. {\it Continuous convergence in control actions} of models and cost functions was shown to be a unifying condition for continuity and robustness properties in both the discrete-time setup, studied in \cite{KY-20} (discounted cost) and \cite{KRY-20} (average cost), and continuous-time. Also, it is important to note that under appropriate smoothness assumptions on $\sigma$, the robustness of optimal control for Stratonovich model (\ref{EAM}) under parametric perturbation of the model follows from \cite{pradhan2022robustness}\,. 

However, for many practical systems, it is not the drift and diffusion terms, but the actual driving noise itself which is incorrectly modelled. In particular, despite the tremendous success of stochastic analysis building on the It\^o theory involving Brownian noise and associated semi-martingale diffusions, due to the structural and regularity properties of Brownian noise facilitating a versatile mathematical analysis, it should be emphasized that the Brownian noise is only an idealization and physical models are typically never precisely Brownian leading to a ubiquitious robustness question on the nature of the driving noise process. 

For this problem, the current machinery available in stochastic control theory does not provide a complete theory for such a practically important problem. There are several reasons for this lack of a complete theory of robustness: 

\begin{itemize}
\item \textbf{Inapplicability of It\^o Theory.} One of the issues with It\^o theory is that in general, the solution map taking a sample path of the driving signal to the solution lacks continuity in the driving signal. Consequently, It\^o theory is poorly suited to study variations in the noise. In addition, It\^o calculus only works for semi-martingales. So non-semimartingales such as the fractional Brownian motion which allows for correlations in the increments, is prohibited. See \sy{\cite{Protter-Book,ikeda2014stochastic}} for an introduction to It\^o calculus with semi-martingales.

\item \textbf{Lack of Absolute Continuity.} We should also add that risk-sensitive control methods \cite{dupuis2000robust,albertini2001small,dupuis2000kernel} allow one to obtain performance bounds of control policies designed for a nominal model applied to a true model. Risk-sensitive cost methods have powerful design implications and connections with a game theoretic formulation \cite{jacobson1973optimal,basbern}. However, when the noise processes are Brownian and do not have identical diffusion terms $\sigma$, the mathematical framework cannot support the risk-sensitive formulation as relative entropy {\it apriori} requires absolute continuity of path measures. Accordingly, this approach is not applicable when we deviate from the Brownian paradigm even slightly. 

\item \textbf{Partial Results.} Nonetheless, this problem had received significant attention until a few decades ago: For ``wide bandwidth noise" driven controlled system \cite{K90}, \cite{KR87}, \cite{KR87a}, \cite{KR88}, a diffusion approximation technique was used to study the stochastic optimal control problems. However, the assumptions on the ``wide bandwidth" noise are complicated and difficult to interpret. We note that this body of work pioneered by Kushner and colleagues strictly build on a martingale representation requirement and essentially imposes that the noise distribution satisfy a martingale characterization whose limit is the Brownian, building also on \cite{kurtz1975semigroups}; this limits the class of noise processes which approximates the Brownian noise.

There have also been so called Wong-Zakai theorems \cite{wong1965convergence} for uncontrolled stochastic differential equations (see \cite{Wong-Zakai-Survey} for a survey). Loosely, these state if smooth $W^n$ converge to the Brownian motion $W$ almost surely (in some appropriate sense) then the solutions of the ODEs converge to that of the Stratonovich limit almost surely. This can be seen as an early form of an ingredient needed for robustness. However, these involve only uncontrolled SDEs. For controlled SDEs, both the drift function and the noise might vary. 

\end{itemize}

In view of the above, the problem of robustness to disturbance models is an important, yet unfinished problem. 

In this paper, we will present a general solution to noise approximation problem via the relatively recent theory of rough paths (see \cite{Friz-Victoir-Book,Friz-Hairer-Book}), when we can show that the approximating noise process approaches the Brownian limit in an appropriate sense. This approximation will in particular include wide-band noise approximations (whose limit in the Fourier domain would be the white noise, as commonly called in engineering systems).

Rough paths theory was introduced in \cite{lyons1998differential}. The key point of rough paths theory is that enhancing a signal with its iterated integrals gives more information that sidesteps several of the issues of It\^o theory. Several of its achievements are the following.

\begin{itemize}
    \item \textbf{General Noise} Rough paths theory allows for any noise that is either H\"older continuous or has finite $p$-variation.
    \item \textbf{Continuity of solution map over noise} The It\^o solution map that takes a sample path of a Brownian motion (or general semi-martingale) to the solution of the SDE is in general discontinuous. That is, Brownian paths that are close can have wildly different behavior. However by changing the topology on the space of paths to include the iterated integrals, continuity is restored.
    \item \textbf{Joint continuity over coefficients and noise} The standard Wong-Zakai theorem is a kind of continuity result of Stratonovich differential equations. However, the standard theory does not allow the coefficients and the noise to simultaneously change. Rough paths theory lets both the coefficients and the noise to change at the same time.
    \item \textbf{Pathwise solutions} Solutions to It\^o SDEs are defined in an $L^2$ or limit in probability sense. Rough paths theory allows for SDEs to be defined for each sample path.
\end{itemize}
Rough paths theory is one of the crucial ingredients we use to prove robustness. Notably both It\^o and Stratonovich equations can be understood in the rough paths sense, as the latter generalizes the former integration theories. There is in particular a close relationship between rough paths and Stratonovich integration; e.g \cite{Rough-Trap} shows that the Stratonovich integral of general Gaussian processes coincides with a ``rough integral". 

During our literature research, we also realized that much of the questions raised in this paper received significant attention in the 1960s, and the research motivated what we may refer to as early versions of a rough path theory, which since seem to have been overlooked. In particular, we highlight the connection with the belated integral of McShane (see \cite{McShane-Belated}), which provides a stochastic integration with the desirable property that the solution is continuous in the driving noise and solutions can be defined pathwise. Notably, McShane does this by including not only the process but also the iterated integrals above the process. This is reminiscent of rough paths theory, although the assumptions are quite restrictive. In fact, the assumptions on allowable processes force the process to be a semi-martingale, so it does not extend the class of allowable signals. However the belated integral is defined pathwise and does have continuity. We refer the reader to \cite{kushner2014partial} for an excellent historical perspective.

Stochastic control via rough paths is an emerging subject. One aspect on the pathwise nature of solutions to rough differential equations (RDEs) is that the problems are no longer Markovian or sometimes non-causal. In one line of research, to avoid the anticipative nature of non-Brownian noise and control policies, a Lagrangian penalty approach has been proposed in \cite{DFG-Control}, building on Rogers (see \cite{Rogers-Pathwise}) in discrete time; the approach's primary utility is conceptual as the penalty method imposing non-anticipativity essentially requires the optimal cost to be known. There have also been several papers on non-Markovian models including those that have studied optimal stochastic control with fractional Brownian motion, such as \cite{fBm-control-3,fBm-control-1,fBm-control-2}, for the case of either linear models (where completion of squares methods are shown to be applicable) or more general models where  correlations between the increments are positive. We note that rough paths theory allows for both positive or negative correlations. As an application to financial mathematics, \cite{bauerle2020portfolio} considered systems modeled with fractional Brownian noise and non-Markovian dynamics with dynamics depending on the history of the process.

On filtering theory, \cite{10.1214/19-AAP1558} applies rough path theory to optimal filtering to establish robustness to modeling errors, which also serves as a counterpart to \cite{10.1214/12-AAP896} which has considered the continuity of filters in measurement realizations for a general class of multi-dimensional signal-measurement correlated diffusion models (with the measurement acting as a driving noise, where rough path theory is to be utilized unlike the classical robustness theory involving regular diffusions; see e.g. \cite{clark1978design,clark2005robust}).

{\bf Contributions.} In our paper, as our main result on robustness to Brownian idealizations of the driving noise to be presented in Section \ref{MainResSec}, we show that within the class of Lipschitz continuous control policies, an optimal solution for the Brownian idealized model is near optimal for the true system driven by a non-Brownian (but near-Brownian) noise. We will also state generalizations under relaxed control policies. We will consider the discounted cost criterion and a finite horizon cost criterion. See Section \ref{mainResS} for an explicit statement and Section \ref{MainResSec} for a complete analysis. 

As some examples of near-Brownian but non-Brownian motion we have the following, which will be defined explicitly later in the paper.
\begin{itemize}
    \item The \textbf{Wong-Zakai approximation}, which is an approximation of the Brownian motion that is piecewise linear.
    \item The \textbf{Karhunen-Lo\`eve approximation}, which is an approximation through what are essentially random Fourier series.
    \item The \textbf{mollified Brownian motion}, which is the Brownian motion convolved with a smooth mollifier.
    \item The \textbf{fractional Brownian motion}, which is a generalization of Brownian motion to allow for correlations in the increments.
\end{itemize}
The above is a non-exhaustive list and all we need is almost sure convergence of the process along with its iterated integrals in a suitable topology (the \textit{rough topology}). The last example can be seen as one that is inherent to rough paths. Fractional Brownian motion allows for correlations in the noise while still being Gaussian and enjoying the same properties of self similarity and stationary increments that Brownian motion does. However fractional Brownian motion is not a semi-martingale unless it is the Brownian motion, necessitating the use of tools other than It\^o calculus. 

Additionally, sometimes the integration theory itself is incorrectly modelled. For an example of this phenomenon see \cite{Escudero-1,Escudero-2} which considers an insider trading model under a variety of anticipative stochastic integrals. The authors show that under some choices the insider trader makes less money than the fair trader, demonstrating the need of a critical analysis of integration theories. A priori there is no reason to use one theory of integration over another and the theory of integration itself can be seen as a choice of model. In this context, rough paths theory can be seen as a way of parameterizing integration theories. In addition to showing robustness with respect to the noise, our analysis also leads to implications on robustness with respect to the interpretations of integration theory itself.

\section{Description of the Problem}

\subsection{True Model and Approximate Models}

\textbf{True Model:}
\begin{equation}\label{ETM}
d X^{\epsilon}(t) = b(X^{\epsilon}(t), U(t))dt + \sigma(X^{\epsilon}(t))d \xi^{\epsilon}(t)  
\end{equation} where $\xi^{\epsilon}(t)$ is a near-Brownian noise process\,, {$U(\cdot)$ is a $\cP(\Act \times [0, \infty))$ valued process such that $U(\Act \times [0,t]) = t$ for all $t \geq 0$\,, where $\Act$ is the action space which is assumed to be a compact metric space. The process $U(\cdot)$ is said to be an {\it admissible relaxed} control if $\int_0^{t}\int_{\Act} f(s, \zeta) U(\D s, \D \zeta)$ is progressively measurable with respect to the $\sigma$-algebras $\sF_t^{\epsilon} \df \sigma[0, t]\times \sigma\{\xi^{\epsilon}(s), s\leq t\} $ for each bounded continuous function $f(\cdot)$\,. Let $\Uadm_{T^{\epsilon}}$ be the space of all admissible controls}\,.\\

Here, it is not clear {\it apriori} what one means by the stochastic integration term $\sigma(X(t))d \xi^{\epsilon}(t)$. This will be discussed later in the paper.

The approach is to solve an idealized problem, obtain the optimal solution and apply it to the true model. The goal is to show robustness (or stability) of solutions as the approximating model and the true model are sufficiently close.
\[\quad\]

\noindent\textbf{Approximating Idealized Model:}
\begin{equation}\label{EAM}
d X(t) = b(X(t), U(t))dt + \sigma(X(t))\circ d W(t)    
\end{equation} where $W$ is a $d$-dimensional Wiener process defined on a complete probability space $(\Omega, \mathcal{F}, \mathrm{P})$\,, {$U(\cdot)$ is a $\cP(\Act \times [0, \infty))$ valued process such that for each bounded continuous function $f(\cdot)$ and $t>0$, $\int_0^{t}\int_{\Act} f(s, \zeta) U(\D s, \D \zeta)$ is independent of $W(t+s) - W(t)$ for any $s>0$. Such a process $U(\cdot)$ is called an {\it admissible relaxed} control for the idealized model. The space of all admissible controls for the idealized model is denoted by $\Uadm_I$\,.}

In view of \cite[Theorem~1.2, 1,.4]{Ikeda-Watanabe-book}, we have that 
\begin{align*}
\sigma(X(t))\circ d W(t) &=  \sigma(X(t)) d W(t) + \frac{1}{2}d \sigma(X(t)) d W(t) \\
& = \sigma(X(t)) d W(t) + \frac{1}{2} \frac{\partial\sigma(X(t))}{\partial x}dX(t) d W(t) \\
& = \sigma(X(t)) d W(t) + \frac{1}{2} \frac{\partial\sigma(X(t))}{\partial x}\sigma(X(t))dt\,.
\end{align*} Thus the approximating models (\ref{EAM}) are equivalent to the following It$\hat{\rm o}$ diffusion models
\begin{equation}\label{EAMEqui}
d X(t) = \hat{b}(X(t), U(t))dt + \sigma(X(t)) d W(t) \,, \end{equation} where
\begin{equation*}
\hat{b}(x, u) = b(x, u) + \frac{1}{2}\frac{\partial\sigma(x)}{\partial x}\sigma(x) 
\end{equation*}

In order to ensure the existence and uniqueness of strong solutions of (\ref{EAMEqui}), we impose the following assumptions on the drift $b$ and the diffusion matrix $\sigma$\,. 
\begin{itemize}
\item[\hypertarget{A1}{{(A1)}}]
\emph{Lipschitz continuity:\/}
The function
$\sigma\,=\,\bigl[\sigma^{ij}\bigr]\colon\mathbb{R}^{d}\to\mathbb{R}^{d\times d}$ is in $C_b^3(\mathbb{R}^d; \mathbb{R}^{d\times d})$, and $b\colon \mathbb{R}^d\times\Act\to\mathbb{R}^d$ is Lipschitz continuous in $x$ and $\zeta$. In particular, for some constant $C>0$, we have
\begin{equation}\label{ELipForExis}
\abs{b(x,\zeta_1) - b(y, \zeta_2)} \,\le\, C\,\left(\abs{x-y} + \abs{\zeta_1 - \zeta_2}\right)
\end{equation}
for all $x,y\in \Rd$ and $\zeta_1, \zeta_2\in\Act$\,. Also, we are assuming that $b$ is jointly continuous in $(x,\zeta)$.

\medskip
\item[\hypertarget{A2}{{(A2)}}]
\emph{Boundedness condition:\/}
The function $b$ is uniformly bounded, i.e., for some positive constant $C_0$, we have
\begin{equation*}
\sup_{\zeta\in\Act}\, \abs{b(x, \zeta)}  \,\le\,C_0 \qquad \forall\, x\in\RR^{d}\,.
\end{equation*}

\medskip
\item[\hypertarget{A3}{{(A3)}}]
\emph{Nondegeneracy:\/}
For each $R>0$, it holds that
\begin{equation*}
\sum_{i,j=1}^{d} a^{ij}(x)z_{i}z_{j}
\,\ge\,C^{-1} \abs{z}^{2} \qquad\forall\, x\in \Rd\,,
\end{equation*}
and for all $z=(z_{1},\dotsc,z_{d})\transp\in\RR^{d}$,
where $a\df \frac{1}{2}\sigma \sigma\transp$.

In view of Assumption (\hyperlink{A1}{{(A1)}}), it is easy to see that $\hat{b}$ also satisfies (\ref{ELipForExis})\,.   
It is well known that under the hypotheses \hyperlink{A1}{{(A1)}}--\hyperlink{A3}{{(A3)}}, for each $U\in \Uadm_{I}$ there exists a unique strong solution of (\ref{EAMEqui}) (see \cite[Theorem~2.2.4]{ABG-book}), and for any Markov strategy (i.e., it depends only on the current time and state of the system), (\ref{EAMEqui}) admits a unique strong solution which is also a strong Feller process (see \cite[Theorem~2.2.12]{ABG-book} and the discussion therein for a historical overview)\,.

Let $c\colon\Rd\times\Act \to \RR_+$ be the \emph{running cost} function. We assume that 
\noindent
\begin{itemize}
\item[\hypertarget{A4}{{(A4)}}]
The \emph{running cost} $c$ is bounded (i.e., $\|c\|_{\infty} \leq M$ for some positive constant $M$), jointly continuous in $(x, \zeta)$ and locally Lipschitz continuous in its first argument uniformly with respect to $\zeta\in\Act$.
\end{itemize}
\end{itemize}
\subsection{Cost Evaluation Criteria}
In this manuscript, we are interested in the problem of minimizing infinite horizon discounted cost and finite horizon cost, respectively:

\subsection*{Discounted cost criterion} 
For $U \in\Uadm_{I}$, the associated \emph{$\alpha$-discounted cost} is given by
\begin{equation}\label{EDiscost}
\cJ_{\alpha, I}^{U}(x, c) \,\df\, \Exp_x^{U} \left[\int_0^{\infty} e^{-\alpha t} c(X(s), U(s)) \D s\right],\quad x\in\Rd\,,
\end{equation} where $\alpha > 0$ is the discount factor
and $X(\cdot)$ is the solution of (\ref{EAM}) corresponding to $U\in\Uadm_{I}$ and $\Exp_x^{U}$ is the expectation with respect to the law of the process $X(\cdot)$ with initial condition $x$. The controller tries to minimize (\ref{EDiscost}) over his/her admissible policies $\Uadm_{I}$\,. Thus, a policy $U^{*}\in \Uadm_{I}$ is said to be optimal if for all $x\in \Rd$ 
\begin{equation}\label{OPDcost}
\cJ_{\alpha}^{U^*}(x, c) = \inf_{U\in \Uadm_{I}}\cJ_{\alpha}^{U}(x, c) \,\,\, (\,=:\, \,\, V_{\alpha}^{I}(x))\,,
\end{equation} where $V_{\alpha}(x)$ is called the optimal value.

For the true models, for $U\in\Uadm_{T^{\epsilon}}$ the associated discounted cost is defined as
\begin{equation}\label{EApproDiscost}
\cJ_{\alpha, T^{\epsilon}}^{U}(x, c) \,\df\, \Exp_x^{U} \left[\int_0^{\infty} e^{-\alpha t} c(X^{\epsilon}(s), U(s)) \D s\right],\quad x\in\Rd\,,
\end{equation} where $X^{\epsilon}$ is the solution of (\ref{ETM}) under $U\in\Uadm_{T^{\epsilon}}$\,. The optimal value is defined as 
\begin{equation}\label{EApproOptDisc}
V_{\alpha}^{T^{\epsilon}}(x) \,\df\, \inf_{U\in\Uadm_{T^{\epsilon}}}\cJ_{\alpha, T^{\epsilon}}^{U}(x, c)
\end{equation}
\subsection*{Finite horizon cost criterion}
Let $T>0$ be the fixed time horizon. For each $U\in\Uadm_{I}$ the associated finite horizon cost is defined as
\begin{equation}\label{EFCost1}
\cJ_{T, I}^{U}(x, c) \,\df\, \Exp_x^{U} \left[\int_0^{T} c(X(s), U(s)) \D s + H(X(T))\right],\quad x\in\Rd\,,
\end{equation} where $X(\cdot)$ is the solution of (\ref{EAM}) corresponding to $U\in\Uadm_{I}$, and $H$ is the terminal cost\,. We assume that $H\in \cC_{b}(\Rd)$\,. The optimal value is defined as
\begin{equation}\label{ECost1Opt}
\cJ_{T, I}^{*}(x, c) \,\df\, \inf_{U\in \Uadm_{I}}\cJ_{T, I}^{U}(x, c)\,.
\end{equation}
Then a control $U^*\in \Uadm_{I}$ is said to be optimal if we have 
\begin{equation}\label{ECost1Opt1}
\cJ_{T, I}^{U^*}(x, c) = \cJ_{T, I}^{*}(x, c)\,, \quad x\in\Rd\,.
\end{equation}
Similarly, for each $U\in\Uadm_{T^{\epsilon}}$ the associated finite horizon cost in true model is defined as   
\begin{equation}\label{EFCostAprox1}
\cJ_{T, T^{\epsilon}}^{U}(x, c) = \Exp_x^{U}\left[\int_0^{T} c(X^{\epsilon}(s), U(s)) \D{s} + H(X^{\epsilon}(T))\right]\,,
\end{equation} where $X^{\epsilon}$ is the solution of (\ref{ETM}) under $U\in\Uadm_{T^{\epsilon}}$\,. The optimal value is defined as
\begin{equation}\label{EFCost1OptAprox}
\cJ_{T, T^{\epsilon}}^{*}(x, c) \,\df\, \inf_{x\in\Rd}\inf_{U\in \Uadm_{T^{\epsilon}}}\cJ_{T, T^{\epsilon}}^{U}(x, c)\,.
\end{equation}

\subsection{Existence of Optimal Policies of the Approximating Models}
Let us deﬁne a parametric family of operators $\sL_{\zeta}$ mapping $\cC^{2}(\Rd)$ to $\cC(\Rd)$ by
\begin{equation*}
{\sL}_{\zeta} f(x) \df\, \trace\bigl(a(x)\grad^2 f(x)\bigr) + \,\hat{b}(x,\zeta)\cdot \grad f(x) \,.  
\end{equation*} for all $\zeta\in \Act$ and $f\in \cC^{2}(\Rd)$\,. Now, from \cite[Theorem~3.5.6]{ABG-book}, we have the following complete characterization of the discounted optimal controls for each approximating models.
\begin{theorem}\label{TD1.1}
Suppose Assumptions (A1)-(A4) hold. The optimal discounted cost $V_{\alpha}^I$ defined in (\ref{EApproOptDisc}) is the unique solution in $\cC^2(\Rd)\cap\cC_b(\Rd)$ of the HJB equation
\begin{equation}\label{AppOptDHJB}
\min_{\zeta \in\Act}\left[\sL_{\zeta}V_{\alpha}^I(x) + c(x, \zeta)\right] = \alpha V_{\alpha}^I(x) \,,\quad \text{for all\ }\,\, x\in\Rd\,.
\end{equation}
Moreover, $v^*\in \Usm$ is $\alpha$-discounted optimal control if and only if it is a measurable minimizing selector of (\ref{AppOptDHJB}), i.e.,
\begin{equation}\label{AppOtpHJBSelc}
\hat{b}(x,v^*(x))\cdot \grad V_{\alpha}^I(x) + c(x, v^*(x)) = \min_{\zeta\in \Act}\left[ \hat{b}(x, \zeta)\cdot \grad V_{\alpha}^I(x) + c(x, \zeta)\right]\quad \text{a.e.}\,\,\, x\in\Rd\,.
\end{equation}
\end{theorem}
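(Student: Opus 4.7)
The plan is to reduce the claim to the classical HJB characterization for controlled uniformly elliptic diffusions in \cite{ABG-book}, which requires checking that \hyperlink{A1}{(A1)}--\hyperlink{A4}{(A4)} meet its hypotheses, and then to add the usual verification and measurable-selection argument to obtain the optimality characterization (\ref{AppOtpHJBSelc}). First I would verify the regularity/ellipticity package. By \hyperlink{A1}{(A1)}, $\sigma\in\cC_b^3$ implies that the modified drift $\hat{b}(x,\zeta) = b(x,\zeta) + \tfrac12 \frac{\partial\sigma(x)}{\partial x}\sigma(x)$ inherits Lipschitz continuity in $x$ uniformly in $\zeta\in\Act$ and joint continuity in $(x,\zeta)$; together with \hyperlink{A2}{(A2)} and the boundedness of $\sigma$, $\hat b$ is bounded. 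Condition \hyperlink{A3}{(A3)} is uniform ellipticity of $a = \tfrac12\sigma\sigma\transp$, and \hyperlink{A4}{(A4)} gives boundedness, joint continuity, and local Lipschitz continuity of the running cost $c$. With $\Act$ a compact metric space, these are precisely the standing hypotheses required in \cite{ABG-book} for the existence of a solution $V\in \cC^{2,\alpha}_{\mathrm{loc}}(\Rd)\cap\cC_b(\Rd)$ to (\ref{AppOptDHJB}); it is constructed by solving Dirichlet problems for the Bellman equation on balls $B_n$ with zero boundary data, applying Krylov--Safonov together with interior Schauder-type estimates for the fully nonlinear operator to obtain uniform interior $\cC^{2,\alpha}$ bounds, and extracting a $\cC^2_{\mathrm{loc}}$ subsequential limit bounded by $M/\alpha$.

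Next I would run a verification argument to identify the constructed $V$ with the value function $V_\alpha^I$. For any admissible relaxed $U\in\Uadm_I$, It\^o's formula applied to $e^{-\alpha t}V(X(t))$, combined with the HJB inequality $\sL_\zeta V(x) + c(x,\zeta)\ge \alpha V(x)$ for every $\zeta\in\Act$, and the martingale property of the stochastic integral (guaranteed by boundedness of $V$, $\grad V$ on compacts and of $\sigma$), yields
\[
V(x) \,\le\, \Exp_x^U\!\left[\int_0^t\!\!\int_{\Act} e^{-\alpha s}\, c(X(s),\zeta)\,U(\D s,\D\zeta)\right] + e^{-\alpha t}\,\Exp_x^U\bigl[V(X(t))\bigr],
\]
and sending $t\to\infty$ gives $V(x)\le V_\alpha^I(x)$. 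Conversely, since $\Act$ is compact metric and $\zeta\mapsto \hat b(x,\zeta)\cdot\grad V(x) + c(x,\zeta)$ is continuous in $\zeta$ and Borel-measurable in $x$, the Kuratowski--Ryll-Nardzewski selection theorem yields a measurable minimizing selector $v^*\in\Usm$; plugging $v^*$ into the same It\^o identity turns every inequality into an equality, so $V(x) = \cJ_{\alpha,I}^{v^*}(x,c) \ge V_\alpha^I(x)$, giving $V=V_\alpha^I$. Uniqueness in $\cC^2\cap\cC_b$ is automatic: any other classical bounded solution $\widetilde V$ satisfies the same chain of inequalities, hence $\widetilde V = V_\alpha^I$. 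The ``if and only if'' in (\ref{AppOtpHJBSelc}) follows from the same It\^o identity run in both directions: a Markov $v^*$ is optimal iff the HJB inequality is saturated along $v^*$ a.e., i.e.\ iff $v^*$ is a measurable minimizing selector.

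The main technical point is the existence and interior $\cC^2$ regularity of a classical solution to the fully nonlinear Bellman equation on all of $\Rd$, which requires the Krylov--Safonov machinery and a careful exhaustion argument to pass from bounded domains to $\Rd$. Under the boundedness and uniform ellipticity supplied by \hyperlink{A2}{(A2)}--\hyperlink{A3}{(A3)} and the smoothness/Lipschitz structure from \hyperlink{A1}{(A1)} and \hyperlink{A4}{(A4)}, this is standard and is exactly what is packaged into \cite[Theorem~3.5.6]{ABG-book}; once the classical solution is in hand, the verification and selection steps above are essentially book-keeping.
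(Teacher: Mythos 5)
Your proposal is correct and follows essentially the same route as the paper, which simply verifies that (A1)--(A4) supply the standing hypotheses (uniform ellipticity, bounded Lipschitz $\hat b$, bounded continuous $c$, compact $\Act$) and invokes \cite[Theorem~3.5.6]{ABG-book} for the HJB characterization and the minimizing-selector criterion. The additional detail you supply (exhaustion by balls with Krylov--Safonov/Schauder estimates, the verification argument via It\^o's formula applied to $e^{-\alpha t}V(X(t))$, and measurable selection) is exactly the content packaged into that cited theorem, so there is no substantive difference in approach.
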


Theorem~\ref{TD1.1}, implies existence of a deterministic optimal policy $v^*(\cdot)\in \Usm$\,. Thus by Lusin's theorem and Tietze's extension theorem as in \cite[Theorem~4.2]{pradhan2022near}, for any $\delta > 0$ we have a continuous function $\Tilde{v}_{\delta}^*$ which is $\delta$ optimal\,. Let $\phi \in \cC_c^{\infty}(\Rd)$ with $\phi = 0$ for $|x|>1$ and $\int_{\Rd}\phi(x) d x = 1$\,. Now for $\epsilon>0$ define $\Tilde{v}_{\delta, \epsilon}^*(x) := \epsilon^{-d}\phi(\frac{x}{\epsilon})* \Tilde{v}_{\delta}^*(x)$ (convolution with respect to $x$). Since, $\Tilde{v}_{\delta}^*$ is continuous it is well known that $\Tilde{v}_{\delta, \epsilon}^*(x) \to \Tilde{v}_{\delta}^*(x)$ for a.e $x\in \Rd$\,. Thus, by dominated convergence theorem we deduce that
\begin{equation}\label{ENearOptiA}
\int_{\Rd} f(x)g(x, \Tilde{v}_{\delta, \epsilon}^*(x)) dx \to \int_{\Rd} f(x)g(x, \Tilde{v}_{\delta}^*(x)) dx      
\end{equation} for all $f\in L^1(\Rd)\cap L^{2}(\Rd)$ and $g\in \cC_{b}(\Rd\times \Act)$\,.
Also, for each $\epsilon > 0$, it is easy to show that $\frac{\partial \Tilde{v}_{\delta, \epsilon}^*}{\partial x^i}$ is bounded for $i= 1,\dots , d$\,. This, in particular implies that for each $\epsilon >0$, $\Tilde{v}_{\delta, \epsilon}^*$ is uniformly Lipschitz continuous in  $\Rd$\,. Therefore, in view of \cite[Theorem~3.1]{pradhan2022near} and (\ref{ENearOptiA}), it follows that uniformly Lipschitz continuous functions are near optimal for the discounted cost criterion\,.

For the finite horizon case, from \cite[Theorem~3.3, p. 235]{BL84-book}, we have the existence of a unique solution to the finite horizon optimality equation (\ref{EFinitecost1A}) (or, the HJB equation). Applying It\^{o}-Krylov formula (as in \cite[Theorem~3.5.2]{HP09-book}), we have existence of a finite horizon optimal Markov policy (for more details see \cite[Section~5]{pradhan2022robustness})\,. In particular we have the following theorem\,.
\begin{theorem}
Suppose Assumptions (A1)-(A4) hold. Then 
\begin{align}\label{EFinitecost1A}
&\frac{\partial \psi}{\partial t} + \inf_{\zeta\in \Act}\left[\sL_{\zeta}\psi + c(x, \zeta) \right] = 0 \nonumber\\
& \psi(T,x) = H(x)
\end{align} admits a unique solution $\psi\in \Sob^{1,2,p,\mu}((0, T)\times\Rd)\cap \Lp^{\infty}((0, T)\times\Rd)$, for some $p\ge 2$ and $\mu > 0$. Moreover, there exists $v^*\in \Um$ such that $\cJ_{T}^{v^*}(x, c) = \cJ_{T}^*(x, c) = \psi(0,x)$\,.
\end{theorem}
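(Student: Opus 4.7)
The plan is to reduce the theorem to two standard blocks: a parabolic HJB well-posedness result in a weighted Sobolev space, followed by a verification argument that uses the It\^o--Krylov formula together with a measurable selector. Both blocks are available off the shelf in the references cited by the paper, so the proof is essentially an assembly.

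First, I would establish existence and uniqueness of $\psi$. Rewrite the equation backward in time by letting $\bar\psi(t,x) = \psi(T-t,x)$; the resulting forward parabolic equation has Hamiltonian $F(x,p,Q) = \inf_{\zeta\in\Act}\{\trace(a(x)Q) + \hat b(x,\zeta)\cdot p + c(x,\zeta)\}$. Under \hyperlink{A1}{(A1)}--\hyperlink{A4}{(A4)}, the coefficients $a$ are $C_b^3$ and uniformly elliptic (by \hyperlink{A3}{(A3)}), $\hat b$ is Lipschitz and bounded, and $c$ is bounded and jointly continuous. Since $\Act$ is a compact metric space, $F$ is continuous in $(x,p,Q)$ and Lipschitz in $(p,Q)$ with moduli inherited from $\hat b$. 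These are exactly the hypotheses of \cite[Theorem~3.3, p.~235]{BL84-book}, which yields a unique solution $\psi\in \Sob^{1,2,p,\mu}((0,T)\times\Rd)\cap L^{\infty}((0,T)\times\Rd)$ for some $p\ge 2$ and a weight $\mu>0$ chosen so the drift and terminal data are admissible in the weighted framework; boundedness of $\psi$ follows from the sup-norm bounds on $c$ and $H$ via the comparison principle (or directly by $|\psi|\le MT+\|H\|_\infty$).

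Second, I would produce the optimal Markov policy. Because $\Act$ is compact and $\zeta\mapsto \hat b(x,\zeta)\cdot p + c(x,\zeta)$ is continuous, the measurable selection theorem (Filippov's implicit function lemma, as used in \cite[Section~5]{pradhan2022robustness}) gives a Borel map $v^*:[0,T]\times\Rd\to\Act$ such that, for a.e.\ $(t,x)$,
\begin{equation*}
\hat b(x,v^*(t,x))\cdot\grad\psi(t,x)+c(x,v^*(t,x))=\inf_{\zeta\in\Act}\bigl[\hat b(x,\zeta)\cdot\grad\psi(t,x)+c(x,\zeta)\bigr].
\end{equation*}
This $v^*$ lies in $\Um$. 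Under $v^*$ the SDE (\ref{EAMEqui}) has a (possibly weak, but in fact strong-Feller, by \cite[Theorem~2.2.12]{ABG-book}) solution $X^{v^*}$; under any $U\in\Uadm_I$ a strong solution also exists by \cite[Theorem~2.2.4]{ABG-book}.

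Third, I would run the verification step. Apply the It\^o--Krylov formula for functions in $\Sob^{1,2,p,\mu}$, as formulated in \cite[Theorem~3.5.2]{HP09-book}, to $\psi(t,X(t))$ along the solution of (\ref{EAMEqui}) under an arbitrary $U\in\Uadm_I$. Since $\psi_t+\sL_\zeta\psi+c(x,\zeta)\ge 0$ a.e., taking expectations and using $\psi(T,\cdot)=H$ yields
\begin{equation*}
\psi(0,x)\;\le\; \Exp^{U}_x\left[\int_0^T c(X(s),U(s))\,\D s+H(X(T))\right]=\cJ_{T,I}^{U}(x,c).
\end{equation*}
Applying the same formula with $U(\cdot)=v^*(\cdot,X(\cdot))$ makes the integrand vanish a.e., giving equality. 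Minimising over $U$ then identifies $\psi(0,x)=\cJ_{T,I}^{*}(x,c)=\cJ_{T,I}^{v^*}(x,c)$.

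The only genuinely delicate point is the legitimacy of the It\^o--Krylov formula for $\psi\in\Sob^{1,2,p,\mu}$ rather than $C^{1,2}$: one needs $p$ and $\mu$ compatible with the non-degeneracy constant in \hyperlink{A3}{(A3)} and the Lipschitz/bounded data so that Krylov's a.e.\ substitution lemma applies. This compatibility is exactly what \cite[Theorem~3.3, p.~235]{BL84-book} and \cite[Theorem~3.5.2]{HP09-book} are designed to secure, and it is the step to be spelled out with care; everything else is a direct assembly.
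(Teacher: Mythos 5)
Your proposal matches the paper's own argument: the paper likewise obtains existence and uniqueness of $\psi$ in the weighted Sobolev space from \cite[Theorem~3.3, p.~235]{BL84-book} and then derives the optimal Markov policy by a verification step via the It\^o--Krylov formula \cite[Theorem~3.5.2]{HP09-book}, with a measurable minimizing selector as detailed in \cite[Section~5]{pradhan2022robustness}. Your write-up simply spells out the selector and verification inequalities that the paper leaves to those references, so it is correct and essentially the same proof.
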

In view of the continuity result as in \cite[Theorem~6.1]{pradhan2022near}, and by similar convolution argument (as in the discounted cost case), one can prove the near optimality of the uniformly Lipschitz continuous functions for the finite horizon cost criterion\,.

\subsection{Problem Statement}\label{mainResS}

For the criteria noted above, our goal is to show that
\begin{itemize}
\item \textbf{For Discounted Cost:} For any Lipschitz continuous $\delta$- optimal control $v^{\delta}$ of the idealized model, we have
\begin{equation}\label{EdisNearOpt} 
\cJ_{\alpha, T^{\epsilon}}^{v_{\delta}}(x, c) \leq V_{\alpha}^{T^{\epsilon}} + 2\delta\,. 
\end{equation} for small enough $\epsilon$\,.
\item \textbf{For Finite Horizon Cost:} For any Lipschitz continuous $\delta$- optimal control $v^{\delta}$ of the idealized model, we have
\begin{equation}\label{EFiniteHNearOpt} 
\cJ_{T, T^{\epsilon}}^{v^{\delta}}(x, c) \leq \cJ_{T, T^{\epsilon}}^{*} + 2\delta\,,
\end{equation} for small enough $\epsilon$\,. 
\end{itemize}

In words, a near-optimal policy for the idealized model is also near-optimal for the true model as the driving noise of the true model approaches the Brownian. 

\section{Supporting Results and Rough Path Topology}

In our paper, we will consider noise processes which will approach the Brownian in an appropriate topology, which is called the rough paths topology.

\subsection{Background: On Rough Paths Theory}

In this section we review essential elements of rough paths theory and rough differential equations. For a comprehensive introduction, see \cite{Friz-Hairer-Book,Friz-Victoir-Book}. 

Rough paths theory allows for a pathwise theory of solutions to differential equations driven by highly irregular signals. The fundamental observation of rough paths theory is that the issue of defining solutions to differential equations driven by an irregular signal $X=(X^1,...,X^d)$ can be reduced to defining the iterated integrals $\int_s^t (X^i(r)-X^i(s))dX^j(r)$. More precisely, rough paths theory treats H\"older continuous driving signals - or in the case of stochastic differential equations, stochastic processes that are almost surely H\"older. We recall the definition of H\"older continuity below.

\begin{definition}
    Define for $\alpha\in (0,1]$ the space $C^\alpha([0,T],\mathbb R^d)$ of $\alpha$-H\"older functions $f:[0,T]\to \mathbb R^d$ equipped with the norm $\|f\|_\alpha:=\sup_{t\neq s}\frac{|f(t)-f(s)|}{|t-s|^\alpha}.$
\end{definition}
If $X$ is a signal that is $\alpha$-H\"older continuous with $\alpha\in (1/3,1/2]$ and $F$ is a smooth function, then for a partition of $[0,t]$, $\mathcal P=\{0=t_0<...<t_n=t\}$ we have that (the a-priori ill defined) integral
\begin{align}
    \int_0^t F(X(r)) dX(r)&=\sum_{k=0}^n \int_{t_k}^{t_{k+1}} F(X(r))dX(r) \nonumber \\
    &=\sum_{k=0}^n \int_{t_k}^{t_{k+1}} F(X(t_k))+F'(X(t_k))(X(r)-X(t_k))+O(|r-t_k|^{2\alpha})dX(r) \nonumber \\
    &=\sum_{k=0}^n \bigg(F(X(t_k))(X(t_{k+1})-X(t_k))+F'(X(t_k))\int_{t_k}^{t_{k+1}}(X(r)-X(t_k)) dX(r) \nonumber \\
    & \quad \quad +O(|t_{k+1}-t_k|^{3\alpha})\bigg). \label{roughHeuristicIter}
\end{align}
As $3\alpha>1$, the remainder term should go to $0$. This reduces the problem of defining the integral $\int_0^t F(X(r))dX(r)$ to just defining $\int_{t_k}^{t_{k+1}}(X(r)-X(t_k)) dX(r)$. We may take the right hand side as a \textit{definition} of the left hand side, so long as we define the iterated integral first. However, if $X$ is irregular then the iterated integral does not exist ''canonically" as a Riemann-Stieltjes integral and therefore must be postulated.

A rough path above a signal $X$ is therefore a pair $\mathbf{X}_{s,t}=(X_{s,t}, \mathbb X_{s,t})$ where $X_{s,t}$ is the increment of $X$ and $\mathbb X_{s,t}$ is a \textit{definition} or \textit{postulation} of the iterated integral $\int_{s}^{t}(X(r)-X(s)) dX(r)$. We give a precise definition below.

\begin{definition}\label{def:rough-path}
    Given a signal $X\in C^\alpha([0,T], \mathbb R^d)$ with $\alpha\in (1/3,1/2]$ we say $\mathbf X=(X, \mathbb X):\Delta_T^2\to \mathbb R^d\oplus \mathbb R^{d\times d}$ is a \textbf{rough path} above $X$ if for all $s,u,t\in [0,T]$ we have
    \begin{align*}
    &(i) \qquad X_{s,t}=X(t)-X(s)\\
        &(ii) \qquad\mathbb X_{s,t}-\mathbb X_{s,u}-\mathbb X_{u,t}=X_{s,u}\otimes X_{u,t}\\
        &(iii)\qquad \|\mathbf X\|_{\alpha, 2\alpha}:=\sup_{t\neq s}\frac{|X_{s,t}|}{|t-s|^\alpha}+\sup_{t\neq s}\frac{|\mathbb X_{s,t}|}{|t-s|^{2\alpha}}<+\infty.
    \end{align*}
Here, $\Delta_T^2=\{(s,t):0\leq s\leq t\leq T\}$ denotes the $2$-simplex. We denote the set of rough paths by $\mathcal C^{\alpha}$ and the set of rough paths above $X$ by $\mathcal C_X^{\alpha}$. The topology generated by the seminorm $\|\cdot\|_{\alpha, 2\alpha}$ is called the \textbf{rough topology}.
\end{definition}

One can check that if $X\in C^1$ and $\int_s^t (X(r)-X(s)) dX(r)$ is the Riemann-Stieltjes integral, $\mathbf{X}_{s,t}:=(X(t)-X(s),\int_{s}^{t}(X(r)-X(s)) dX(r))$ satisfies Definition \ref{def:rough-path}. In this sense, there is a natural embedding of $C^1$ smooth functions into the space of rough paths. The closure of these functions under the rough topology is the space of geometric rough paths.

\begin{definition}
    For $\alpha\in (1/3,1/2]$, denote by $\mathring{\mathcal C}_g^{\alpha}$ the image of the embedding $\iota: C^1([0,T],\mathbb R^d)\to\mathcal C^\alpha$ where $\iota(f)_{s,t}=(f(t)-f(s),\int_s^t (f(r)-f(s))f'(r) dr)$. Let $\mathcal C_g^\alpha$ be the closure of $\mathring{\mathcal C}_g^{\alpha}$ under the seminorm $\|\cdot \|_{\alpha, 2\alpha}$ defined in Definition \ref{def:rough-path}. $\mathcal C_g^\alpha$ is called the set of $\textbf{geometric rough paths}$.
\end{definition}
We recall the following lemma, from \cite{Friz-Hairer-Book}, Section 2.2. 
\begin{lemma}
    Let $\mathbf X=(X, \mathbb X)\in \mathcal C_g^\alpha$ for some $\alpha \in (1/3,1/2]$. Then for all $(s,t)\in \Delta_T^2$ we have 
    \begin{equation}\label{eq:weak-geometric}
        \frac{1}{2}(\mathbb X_{s,t}+\mathbb X_{s,t}^t)=\frac{1}{2}X_{s,t}\otimes X_{s,t},
    \end{equation}
    where $\mathbb X^t$ is the transpose of $\mathbb X$.
\end{lemma}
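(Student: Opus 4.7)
The plan is to verify the identity on the dense subset of smooth paths $\mathring{\mathcal C}_g^\alpha$ first, and then extend to all of $\mathcal C_g^\alpha$ by continuity of the symmetric-part map in the rough topology. This two-step ``smooth-plus-closure'' strategy is the natural one since $\mathcal C_g^\alpha$ is by definition the closure of $\iota(C^1([0,T],\mathbb R^d))$ under $\|\cdot\|_{\alpha,2\alpha}$.

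First I would handle the smooth case. If $f\in C^1([0,T],\mathbb R^d)$ and $\mathbf X = \iota(f)$, then $X_{s,t}=f(t)-f(s)$ and $\mathbb X_{s,t}=\int_s^t (f(r)-f(s))\otimes f'(r)\,\D r$ is an honest Riemann-Stieltjes integral. Applying the classical product rule componentwise to $r\mapsto (f(r)-f(s))\otimes (f(r)-f(s))$ and integrating from $s$ to $t$ yields
\begin{equation*}
X_{s,t}\otimes X_{s,t}\;=\;\int_s^t (f(r)-f(s))\otimes f'(r)\,\D r \;+\; \int_s^t f'(r)\otimes (f(r)-f(s))\,\D r \;=\; \mathbb X_{s,t} + \mathbb X_{s,t}^{t},
\end{equation*}
which is exactly the desired identity (after dividing by $2$). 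So the relation holds pointwise on the simplex $\Delta_T^2$ for every element of $\mathring{\mathcal C}_g^\alpha$.

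Next I would pass to the closure. Take $\mathbf X\in\mathcal C_g^\alpha$ and a sequence $\mathbf X^{(n)}=\iota(f_n)\in\mathring{\mathcal C}_g^\alpha$ with $\|\mathbf X - \mathbf X^{(n)}\|_{\alpha,2\alpha}\to 0$. The defining seminorm controls $|X_{s,t}-X^{(n)}_{s,t}|\le \|\mathbf X-\mathbf X^{(n)}\|_{\alpha,2\alpha}|t-s|^\alpha$ and $|\mathbb X_{s,t}-\mathbb X^{(n)}_{s,t}|\le \|\mathbf X-\mathbf X^{(n)}\|_{\alpha,2\alpha}|t-s|^{2\alpha}$, so both $X^{(n)}_{s,t}\to X_{s,t}$ and $\mathbb X^{(n)}_{s,t}\to \mathbb X_{s,t}$ for each fixed $(s,t)$. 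Since transposition is a bounded linear map on $\mathbb R^{d\times d}$ and the tensor product map $v\mapsto v\otimes v$ is continuous, we can pass to the limit in the identity $\tfrac12(\mathbb X^{(n)}_{s,t}+(\mathbb X^{(n)}_{s,t})^t)=\tfrac12 X^{(n)}_{s,t}\otimes X^{(n)}_{s,t}$ established in step one, concluding the proof.

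The only subtlety, and the main ``obstacle'' such as it is, is making sure that the convergence one gets from the rough topology is strong enough to close the identity: since $\|\cdot\|_{\alpha,2\alpha}$ already delivers uniform (in $(s,t)$) control on both $X_{s,t}$ and $\mathbb X_{s,t}$, there is no actual difficulty, and no integration-by-parts argument needs to be carried out in the rough regime itself. This is in fact the standard way one sees that every \emph{weakly} geometric rough path satisfies \eqref{eq:weak-geometric}, and explains why the ``weak geometric'' condition is automatic for elements of $\mathcal C_g^\alpha$.
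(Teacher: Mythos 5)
Your proof is correct and follows essentially the same route as the source the paper relies on: the paper states this lemma without proof, citing Friz--Hairer Section~2.2, and the standard argument there is precisely your two-step scheme of verifying the identity for smooth lifts via the product rule (integration by parts) and then passing to the pointwise limit using the $\|\cdot\|_{\alpha,2\alpha}$ convergence that defines $\mathcal C_g^\alpha$. No gaps; the limiting step is as routine as you indicate, since transposition and $v\mapsto v\otimes v$ are continuous.
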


\begin{remark}
    The set of rough paths satisfying equation \eqref{eq:weak-geometric} is often called the set of \textbf{weakly geometric} rough paths. See \cite{Friz-Hairer-Book}, Section 2.2 for a discussion on why the distinction between geometric and weakly geometric rarely matters. 
\end{remark}

Given a rough path, we may define the class of admissible integrands as the set of \textit{controlled rough paths}. 

\begin{definition}\label{def:controlled-rough-path}
    Given a function $X\in C^\alpha$ with $\alpha\in (1/3,1/2]$ we say that $y\in C^\alpha$ is \textbf{controlled by} $X$ if there exists a matrix $y'\in C^\alpha$ (called the \textbf{Gubinelli derivative}) so that
    \begin{equation}
        y(t)-y(s)=y'(s)(X(t)-X(s))+R_Y(s,t),
    \end{equation}
    where $R_Y(s,t)\in C^{2\alpha}$. The pair $(y,y')$ is a controlled rough path.
\end{definition}
\begin{remark}
The terminology in Definition \ref{def:controlled-rough-path} might be misleading. First - a controlled rough path is not a rough path. Second - a controlled rough path is not controlled in the sense of control theory. It is simply the class of integrands and includes solutions to differential equations driven by $X$, $F(X)$ for smooth $F$ and other types of integrands that have a ``Taylor-type" dependence on $X$. Also note that there is no reference to rough path in the definition of controlled path. Additionally, the Gubinelli derivative may not be unique.
\end{remark}

Continuing along the analysis in (\ref{roughHeuristicIter}), we present the following theorem on integration of rough paths and on the continuity and solution properties of RDEs.

\begin{theorem}[\cite{Friz-Hairer-Book}, Theorem 4.10]
    Let $T>0$. Let $\mathbf{X}=(X,\mathbb X)$ be a rough path as in Definition \ref{def:rough-path}. Let $(y,y')$ be controlled by $X$ as in Definition \ref{def:controlled-rough-path}. Then the following limit exists
    \begin{equation}\label{eq:rough-integral}
        \int_0^T y d\mathbf X=\lim_{|\mathcal P|\to 0}\sum_{[s,t]\in \mathcal P} y_sX_{s,t}+y_s' \mathbf X_{s,t}
    \end{equation}
    for any sequence of partitions with $\mathcal P\to 0$. The expression in \eqref{eq:rough-integral} is called the \textbf{rough integral} of $y$ against $\mathbf X$.
\end{theorem}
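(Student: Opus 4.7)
The plan is to apply Gubinelli's Sewing Lemma to the candidate germ $\Xi_{s,t} := y_s\, X_{s,t} + y_s'\, \mathbb{X}_{s,t}$, viewed as an ``almost additive'' substitute for the increments of the sought-after integral. Recall that the Sewing Lemma asserts: whenever a continuous two-parameter map $\Xi \colon \Delta_T^2 \to \mathbb{R}^d$ satisfies $|\delta \Xi_{s,u,t}| \le C|t-s|^{\beta}$ with $\beta > 1$, where $\delta \Xi_{s,u,t} := \Xi_{s,t} - \Xi_{s,u} - \Xi_{u,t}$, the Riemann-type sums $\sum_{[s,t]\in\mathcal{P}} \Xi_{s,t}$ converge as $|\mathcal{P}| \to 0$ to a limit that does not depend on the chosen sequence of partitions. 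The whole argument thus reduces to verifying this defect bound for our specific $\Xi$.

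The central computation is to expand $\delta \Xi_{s,u,t}$ using the controlled-path decomposition $y_u - y_s = y_s'\, X_{s,u} + R_Y(s,u)$ from Definition \ref{def:controlled-rough-path} together with Chen's relation $\mathbb{X}_{s,t} - \mathbb{X}_{s,u} - \mathbb{X}_{u,t} = X_{s,u} \otimes X_{u,t}$ from Definition \ref{def:rough-path}(ii). After regrouping, the cross term $y_s'\, X_{s,u}\otimes X_{u,t}$ generated by Chen's relation cancels exactly against the term $-y_s'\, X_{s,u}\, X_{u,t}$ arising from $y_s X_{s,t} - y_s X_{s,u} - y_u X_{u,t}$, leaving
\[
  \delta \Xi_{s,u,t} \;=\; -R_Y(s,u)\, X_{u,t} \;-\; (y_u' - y_s')\, \mathbb{X}_{u,t}.
\]
Each summand pairs a factor of regularity $2\alpha$ with a factor of regularity $\alpha$: applying the bounds $|R_Y(s,u)| \lesssim |u-s|^{2\alpha}$, $|X_{u,t}| \lesssim |t-u|^\alpha$, $|y_u' - y_s'| \lesssim |u-s|^\alpha$, and $|\mathbb{X}_{u,t}| \lesssim |t-u|^{2\alpha}$ yields $|\delta \Xi_{s,u,t}| \lesssim |t-s|^{3\alpha}$.

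Since Definition \ref{def:rough-path} requires $\alpha > 1/3$, we have $\beta := 3\alpha > 1$, so the Sewing Lemma applies, producing both the existence of the limit in \eqref{eq:rough-integral} and its independence from the choice of partition sequence. The main potential obstacle is the Sewing Lemma itself, whose proof is a non-trivial dyadic telescoping argument that balances the defect bound $|\delta \Xi| \lesssim |t-s|^{3\alpha}$ against the factor $2^{-n(3\alpha-1)}$ arising from successive refinements; this is however a standard ingredient of rough paths theory for which we can simply cite \cite{Friz-Hairer-Book}. The remainder of the argument, namely the algebraic identity for $\delta \Xi$ and the regularity bookkeeping, is elementary and uses no structural input beyond Definitions \ref{def:rough-path} and \ref{def:controlled-rough-path}.
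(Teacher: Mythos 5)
Your proposal is correct and follows the standard argument: the paper itself states this result only by citation to \cite{Friz-Hairer-Book} (Theorem 4.10), and the proof there is exactly your route, namely checking Chen's relation together with the controlled-path remainder to get $\delta \Xi_{s,u,t} = -R_Y(s,u)X_{u,t} - (y_u'-y_s')\mathbb{X}_{u,t} = O(|t-s|^{3\alpha})$ and invoking the Sewing Lemma with $3\alpha>1$. Both the algebraic cancellation and the regularity bookkeeping in your write-up are accurate, so there is nothing to add.
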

We cite the following theorem, which gives continuity not only in the driving signal but also in the coefficients. This is one of the main achievements of rough paths theory. 
\begin{theorem}[\cite{Friz-Victoir-Book}, Theorem(12.11)]\label{ThmCont1A}
   Let $\mathbf X^1$ and $\mathbf X^2$ be two rough paths as in Definition \ref{def:rough-path}. Consider the two rough differential equations
    \begin{align*}
        dY^1&=b_1(Y^1)dt+\sigma(Y^1) d\mathbf X^1\\
        dY^2&=b_2(Y^2)dt+\sigma(Y^2) d\mathbf X^2,
    \end{align*}
    where $\sigma\in C_b^3$ is thrice differentiable with bounded derivatives, $b_1$ and $b_2$ are Lipschitz, and $Y^1(0)=\xi_1$ and $Y^2(0)=\xi_2$. There there exist unique solutions $Y^1,Y^2$, that are controlled by $X$ in the sense of Definition \ref{def:controlled-rough-path}. Then we have that for all $\varepsilon>0$ there exists some $\delta>0$ so that if
    \begin{equation}
         |\xi_1-\xi_2|+\|\mathbf X^1-\mathbf X^2\|_{\alpha, 2\alpha}+\|b_1-b_2\|_{\infty}<\delta
    \end{equation}
    then 
    \begin{equation}
        \|Y^1-Y^2\|_\infty<\varepsilon.
    \end{equation}
\end{theorem}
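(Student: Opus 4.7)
The plan is to prove Theorem \ref{ThmCont1A} by combining a contraction mapping existence argument with a stability estimate in the controlled rough path norm, following the standard rough paths strategy.

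First, I would establish existence and uniqueness of each $Y^i$ as a fixed point of the map
\[
\Phi_i(Z)(t) \,=\, \xi_i + \int_0^t b_i(Z(s))\,ds + \int_0^t \sigma(Z(s))\,d\mathbf{X}^i(s),
\]
acting on the Banach space of controlled rough paths $(Z, Z')$ above $X^i$ with Gubinelli derivative $Z' = \sigma(Z)$. Using the a priori bounds on the rough integral from Theorem 4.10 of \cite{Friz-Hairer-Book}, together with the $C_b^3$ hypothesis on $\sigma$ and Lipschitz continuity of $b_i$, one checks that $\Phi_i$ is a contraction on a small time interval $[0,T_0]$ whose length depends only on uniform bounds. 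Patching local solutions on successive intervals yields a unique global controlled rough path solution $Y^i$ on $[0,T]$.

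Second, I would derive the stability estimate. Writing
\[
Y^1(t) - Y^2(t) \,=\, (\xi_1 - \xi_2) + \int_0^t \bigl[b_1(Y^1(s)) - b_2(Y^2(s))\bigr]\,ds + \left(\int_0^t \sigma(Y^1)\,d\mathbf{X}^1 - \int_0^t \sigma(Y^2)\,d\mathbf{X}^2\right),
\]
the drift difference splits as $[b_1(Y^1) - b_1(Y^2)] + [b_1(Y^2) - b_2(Y^2)]$, contributing respectively a Lipschitz term $L\int_0^t |Y^1-Y^2|\,ds$ and a uniform perturbation $T\|b_1-b_2\|_\infty$. The rough integral difference is controlled via the key stability lemma for rough integration against controlled paths, which bounds this difference in the $\alpha$-H\"older seminorm by a constant multiple of $\|\mathbf{X}^1-\mathbf{X}^2\|_{\alpha,2\alpha}$ plus the difference of the integrands and of their Gubinelli derivatives in controlled-rough-path norm; since the integrands are $\sigma(Y^1), \sigma(Y^2)$ with $Y^1, Y^2$ themselves controlled by $\mathbf{X}^1, \mathbf{X}^2$, and $\sigma\in C_b^3$, these differences reduce to a linear function of $\|Y^1-Y^2\|$ in the appropriate norm plus the driver distance. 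Iterating a Gronwall-type estimate on successive subintervals of length governed by the uniform a priori bounds then yields, for any $\varepsilon>0$, a $\delta>0$ so that the hypothesis of the theorem implies $\|Y^1-Y^2\|_\infty < \varepsilon$.

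The main obstacle is the stability of the rough integral: because the controlled rough path norm involves both the function and its Gubinelli derivative, a naive Gronwall applied directly to $|Y^1(t)-Y^2(t)|$ does not close. One must track the pair $(Y^i, \sigma(Y^i))$ jointly and establish \emph{uniform} a priori bounds on $\|(Y^i, \sigma(Y^i))\|_{\alpha,2\alpha,\mathbf{X}^i}$ depending only on $\|\mathbf{X}^i\|_{\alpha,2\alpha}$, $\|\sigma\|_{C_b^3}$ and the Lipschitz constants of $b_i$. These bounds, combined with a technical ``distance between controlled rough paths above different drivers" (as in Chapter 9 of \cite{Friz-Hairer-Book}), are the crucial ingredient that lets the estimates pass from infinitesimal to global, and give joint continuity in $(\xi_i, b_i, \mathbf{X}^i)$ rather than separate continuity in each argument.
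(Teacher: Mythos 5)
Your sketch is essentially correct, but note that the paper itself gives no proof of this statement: it is quoted verbatim (as a citation of Friz--Victoir, Theorem 12.11), so the relevant comparison is with the proof in that reference, and your route is genuinely different. You argue in Gubinelli's controlled-rough-path framework (Friz--Hairer style): a fixed point for $(Z,Z')$ with $Z'=\sigma(Z)$ on a small interval, patching, then the stability estimate for rough integration against controlled paths over \emph{different} drivers, closed by a discrete Gronwall over subintervals whose length is governed by uniform a priori bounds on $\|(Y^i,\sigma(Y^i))\|$ in terms of $\|\mathbf X^i\|_{\alpha,2\alpha}$, $\|\sigma\|_{C_b^3}$ and the Lipschitz constants of $b_i$. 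You correctly identify the one point where a naive Gronwall fails and how to repair it, and your argument does deliver the joint (locally uniform) continuity in $(\xi_i,b_i,\mathbf X^i)$ claimed in the statement --- with the implicit and correct caveat that $\delta$ depends on a bound for the rough path norms, not just on $\varepsilon$. The cited Friz--Victoir proof instead works in the $p$-variation/Lyons setting with $\mathrm{Lip}^\gamma$ vector fields, and handles the drift not by a separate splitting $[b_1(Y^1)-b_1(Y^2)]+[b_1(Y^2)-b_2(Y^2)]$ but by absorbing $b\,dt$ into an augmented (time-space) driver, so that joint continuity in the drift and the noise follows from continuity of the It\^o--Lyons map in the driver alone; that approach covers arbitrary roughness $p$ and geometric rough paths in one stroke, whereas your controlled-paths argument is more elementary and self-contained at the H\"older scale $\alpha\in(1/3,1/2]$ actually used in this paper, and yields an explicit local-Lipschitz estimate rather than bare continuity. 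Either route proves the theorem as stated; since the paper only needs the $\alpha$-H\"older case with $C_b^3$ diffusion and Lipschitz drifts, your proof would in fact suffice for all applications made of Theorem \ref{ThmCont1A} here.
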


\section{Noise Processes Approximating the Brownian in the Rough Paths Topology}
In this section, we give some examples of approximations that rough analysis can handle. 
\subsection{Finite Bandwith Karhunen-Lo\`eve Approximations}
\begin{proposition}[\cite{Karhunen-Loeve-Thesis}, Theorem 5.14]
    Let $W=(W_1,...,W_d)$ be a standard Brownian motion. Then for each $W^i$ we have the following Karhunen-Lo\`eve expansion
    \begin{equation}
        W_i(t)=\sqrt{2}\sum_{k=1}^\infty \frac{\sin((k-\frac12)\pi t)}{(k-\frac12)\pi} Z_k^i,
    \end{equation}
    where $Z_k^i\sim N(0,1)$ are i.i.d. and the convergence holds pointwise a.s. or a.s. in $L^2(0,T).$
\end{proposition}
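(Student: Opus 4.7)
The plan is to apply the general Karhunen-Loève theorem to Brownian motion and identify the eigenfunctions and eigenvalues of the covariance operator explicitly. Since $W_1,\dots,W_d$ are independent one-dimensional standard Brownian motions, it suffices to prove the expansion for a single component; the coefficients across different components are then automatically independent by independence of the $W_i$. I will work on $[0,1]$ (as suggested by the form of the eigenfrequencies in the statement).

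First, I would invoke the general Karhunen-Loève theorem: for a centered, $L^2$-continuous process $X$ on $[0,T]$ with continuous covariance $K(s,t)=\Exp[X(s)X(t)]$, the integral operator $T_K f(s)=\int_0^T K(s,t) f(t)\,\D t$ is compact, self-adjoint and positive on $L^2([0,T])$, hence admits an orthonormal basis of eigenfunctions $(e_k)$ with eigenvalues $\lambda_k\geq 0$. One then gets the representation $X(t)=\sum_k \sqrt{\lambda_k}\,e_k(t)\,Z_k$, where $Z_k=\lambda_k^{-1/2}\int_0^T e_k(t)X(t)\,\D t$ are mean-zero and orthonormal in $L^2(\Omega)$. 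For Gaussian $X$, the $Z_k$ are jointly Gaussian with diagonal covariance, hence i.i.d.\ $N(0,1)$.

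Second, I would specialize to $X=W$, where $K(s,t)=\min(s,t)$, and solve the eigenvalue problem
\begin{equation*}
\int_0^1 \min(s,t)\,e(t)\,\D t = \int_0^s t\,e(t)\,\D t + s\int_s^1 e(t)\,\D t = \lambda e(s).
\end{equation*}
Evaluating at $s=0$ yields the boundary condition $e(0)=0$. Differentiating once in $s$ gives $\lambda e'(s)=\int_s^1 e(t)\,\D t$, so $e'(1)=0$. Differentiating again produces the Sturm-Liouville problem $-e(s)=\lambda e''(s)$, i.e., $e''+\omega^2 e=0$ with $\omega=\lambda^{-1/2}$. The general solution $e(s)=A\sin(\omega s)+B\cos(\omega s)$ subject to $e(0)=0$ forces $B=0$, and $e'(1)=0$ yields $\cos\omega=0$. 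Hence $\omega_k=(k-\tfrac12)\pi$, $\lambda_k=((k-\tfrac12)\pi)^{-2}$, and the $L^2([0,1])$-normalized eigenfunctions are $e_k(s)=\sqrt{2}\sin((k-\tfrac12)\pi s)$. Substituting into the general expansion yields exactly
\begin{equation*}
W(t)=\sum_{k=1}^\infty \sqrt{\lambda_k}\,e_k(t)\,Z_k=\sqrt 2\sum_{k=1}^\infty \frac{\sin((k-\tfrac12)\pi t)}{(k-\tfrac12)\pi}\,Z_k,
\end{equation*}
with $Z_k$ i.i.d.\ $N(0,1)$.

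Finally, I would address the modes of convergence. $L^2([0,T])$ almost sure convergence follows from Mercer's theorem (the kernel $K$ is continuous) together with monotone convergence applied to the squared remainder, which is a nonnegative tail sum of independent Gaussians. For pointwise almost sure convergence, the partial sums form a sum of independent symmetric Gaussian random variables in the Banach space $C([0,1])$, so the Itô-Nisio theorem upgrades convergence in distribution (or in probability, which follows from the variance computation using Parseval) to almost sure convergence uniformly in $t$. The main step requiring care is the upgrade from $L^2$ to almost sure pointwise convergence; everything else is a direct eigenvalue computation. Since this is a classical result, the shortest path in the paper is simply to cite it as stated.
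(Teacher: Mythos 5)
The paper does not prove this proposition at all: it is quoted as a known result from the cited reference (Theorem 5.14 of the Karhunen--Lo\`eve thesis), so there is no internal proof to compare against, and your instinct at the end -- that citing it is the intended path -- matches what the paper actually does. Your self-contained derivation is the standard one and is essentially correct: the reduction to one coordinate by independence, the eigenvalue problem for the kernel $\min(s,t)$ with boundary conditions $e(0)=0$ and $e'(1)=0$, giving $\omega_k=(k-\tfrac12)\pi$, $\lambda_k=\omega_k^{-2}$ and normalized eigenfunctions $\sqrt{2}\sin(\omega_k t)$, and the identification of the coefficients as i.i.d.\ $N(0,1)$ via joint Gaussianity plus orthonormality all check out, as does the a.s.\ $L^2$ convergence via the nonnegative decreasing tail $\sum_{k>n}\lambda_k (Z_k^i)^2$ (Mercer is not even needed there). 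Two minor points. First, for the \emph{pointwise} a.s.\ convergence actually asserted in the statement you do not need It\^o--Nisio: for fixed $t$ the series is a sum of independent centered Gaussians with variances $2\lambda_k\sin^2(\omega_k t)\le 2\lambda_k$, which are summable, so Kolmogorov's convergence theorem already gives a.s.\ convergence. Second, if you do invoke It\^o--Nisio to get the stronger uniform a.s.\ convergence, the hypothesis you must supply is convergence in distribution (or probability) of the partial sums \emph{in} $C([0,1])$; a Parseval/variance computation only controls the $L^2(0,T)$ norm, so you would need a tightness or maximal-inequality argument there -- as written, that sentence is the one hand-wavy step, though it does not affect the claims of the proposition. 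Finally, note the small domain mismatch: your eigenfunctions are computed on $[0,1]$ while the statement speaks of $L^2(0,T)$; either take $T=1$ as the displayed frequencies suggest, or rescale using Brownian self-similarity.
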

We denote the partial sum by
\begin{equation}
    W_i^n(t)=\sqrt{2}\sum_{k=1}^n \frac{\sin((k-\frac12)\pi t)}{(k-\frac12)\pi} Z_k^i.
\end{equation}

The convergence can be improved to convergence under the rough topology, however, with the following proposition. See \cite{Friz-Victoir-Gaussian-1}, Theorem 35 and 37. Another reference is \cite{Jain-Monrad-Condition} Corollary 2.3 and Example 2.4 (with $H=1/2$).

In addition, the Karhunen-Lo\`eve expansion converges with probability one on the space of rough paths. Although we expect this to be known, we have not been able to find the precise statement and therefore for completeness we supply a short proof.
\begin{proposition}
    Let  $\mathbf W^n=(W^n,\mathbb W^n)$ be Karhunen-Lo\`eve Brownian motion enhanced with its canonical iterated integrals and let $\mathbf W:=(W,\mathbb W^{Strat})$ be Brownian motion enhanced with Stratonovich integrals. Then with probability one, $\mathbf W^n$ converges to $\mathbf W$ in the rough topology.  
\end{proposition}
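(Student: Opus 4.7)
The plan is to upgrade the $L^p$-convergence of Karhunen--Lo\`eve enhancements (available from the Gaussian rough paths machinery that the proposition already cites) to almost sure convergence via a Borel--Cantelli argument. The short proof proceeds in three ingredients: extract a quantitative rate from the Gaussian rough path moment estimates; exploit the orthogonality of the KL basis to compute this rate explicitly in $n$; and invoke Borel--Cantelli along the full sequence.

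First, I would invoke the cited Friz--Victoir criterion (Theorems~35 and 37 in \cite{Friz-Victoir-Gaussian-1}) in its quantitative form: for jointly Gaussian, centred processes $W$ and $W^n$ whose covariance has finite $\rho$-variation on $[0,T]^2$ with $\rho<3/2$, and for $\alpha<\tfrac{1}{2\rho}$, there is a constant $C_p=C_p(\alpha,T,\rho)$ such that
\[
\Exp\bigl[\|\mathbf{W}^n-\mathbf{W}\|_{\alpha,2\alpha}^{p}\bigr]^{1/p}
\,\leq\, C_p\,\bigl\|K_{n}\bigr\|_{\rho\text{-var};[0,T]^2}^{1/2},
\]
where $K_n$ is the covariance of the tail $W-W^n$. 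Brownian motion's covariance $R(s,t)=\min(s,t)$ has finite $1$-variation, so one may take $\rho=1$ throughout; and since each $W^n$ is smooth, its canonical iterated integrals are honest Riemann--Stieltjes integrals, so $\mathbf{W}^n$ is unambiguously defined.

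Second, using orthogonality of the sine basis, the tail covariance has the closed form
\[
K_n(s,t)\;=\;2\sum_{k=n+1}^{\infty}\frac{\sin((k-\tfrac12)\pi s)\sin((k-\tfrac12)\pi t)}{((k-\tfrac12)\pi)^2}.
\]
Estimating each summand in absolute value and summing the $2$D mixed increments, one obtains $\|K_n\|_{1\text{-var};[0,T]^2}=O(n^{-1})$ (with constants depending only on $T$). Substituting back yields, for every $p<\infty$,
\[
\Exp\bigl[\|\mathbf{W}^n-\mathbf{W}\|_{\alpha,2\alpha}^{p}\bigr]\,\leq\, C_p\, n^{-p/2}.
\]

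Finally, Chebyshev's inequality with $p$ chosen so that $p/2>1$ gives
\[
\sum_{n\geq 1}\Prob\bigl(\|\mathbf{W}^n-\mathbf{W}\|_{\alpha,2\alpha}>\varepsilon\bigr)
\,\leq\, \sum_{n\geq 1}\varepsilon^{-p}\,C_p\,n^{-p/2}\,<\,\infty,
\]
and Borel--Cantelli delivers $\|\mathbf{W}^n-\mathbf{W}\|_{\alpha,2\alpha}\to 0$ almost surely. The main obstacle I foresee is bookkeeping the $2$D $\rho$-variation of $K_n$ carefully enough to obtain the $n^{-1}$ rate; once that is in hand the rest is routine. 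An alternative route would be to use the martingale structure of the partial sums ($W^n$ is $\Exp[W\mid Z_1,\dots,Z_n]$) together with a maximal inequality in the rough topology, but the moment-plus-Borel--Cantelli path above is shorter and leverages exactly the Gaussian rough path tools already in play.
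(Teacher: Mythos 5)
Your overall scheme -- a quantitative $L^q$ bound on the rough distance in terms of the covariance of the tail $W-W^n$, followed by Chebyshev and Borel--Cantelli along the full sequence -- is exactly the strategy of the paper's proof. The gap is in the step you yourself flagged as "bookkeeping": the claim that $\|K_n\|_{1\text{-var};[0,T]^2}=O(n^{-1})$ obtained by "estimating each summand in absolute value and summing the 2D mixed increments" does not go through. The rectangular increment of the $k$-th summand over $[s,s']\times[t,t']$ factors as $\frac{2}{((k-\frac12)\pi)^2}\bigl(\sin((k-\tfrac12)\pi s')-\sin((k-\tfrac12)\pi s)\bigr)\bigl(\sin((k-\tfrac12)\pi t')-\sin((k-\tfrac12)\pi t)\bigr)$, and the total variation of $t\mapsto\sin((k-\frac12)\pi t)$ on $[0,T]$ grows like $k$; squaring it cancels the $((k-\frac12)\pi)^{-2}$, so each summand contributes $2$D $1$-variation of order one. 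The term-by-term estimate over $k>n$ therefore does not even produce a convergent series, let alone a rate $n^{-1}$. What does decay like $n^{-1}$ is only the \emph{supremum} $\sup_{s,t}|K_n(s,t)|$ (by Cauchy--Schwarz and the tail-variance computation), not the $2$D $1$-variation.

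This is precisely how the paper proceeds: it computes $\Exp[(W_i^n(t)-W_i(t))^2]\lesssim n^{-1}$, controls the covariance of the difference pointwise via Cauchy--Schwarz, and then invokes the quantitative Gaussian rough path theorem (main Theorem~1 of the cited Friz--Victoir work on Gaussian signals), which combines the pointwise decay of $K_n$ with a \emph{uniform} (not decaying) bound on the $\rho$-variation of the covariances -- in effect an interpolation between the sup norm and the $\rho$-variation norm -- to obtain $\Exp[\rho_\alpha^q(\mathbf W^n,\mathbf W^{Strat})]\leq C_q\,n^{-q\theta}$ for some possibly small $\theta>0$. That weaker polynomial rate is still summable for $q$ large, so Borel--Cantelli closes the argument exactly as in your final step. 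To repair your write-up, replace the $1$-variation rate claim by the sup-norm decay of $K_n$ together with a uniform $\rho$-variation bound on the truncated covariances, and cite a theorem that converts this pair of estimates into an $L^q$ rate (accepting an exponent $\theta$ rather than $1/2$); the rest of your argument then stands.
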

\begin{proof}
First, we will check the covariance of $B^n - B$. To this end, see that for $1\leq i,j\leq d$ we have that the covariance is $0$ if $i\neq j$. If $i=j$ by Cauchy-Schwarz we have that
\begin{align*}
    |E[(W_i^n(t)-W_i(t))(W_j^n(s)-W_j(s))]|&\leq \sqrt{E[(W_i^n(t)-W_i(t))^2]E[(W_i^n(s)-W_i(s))^2]}.
\end{align*}
By Fatou-Lebesgue theorem we have
\begin{align*}
    E[(W_i^n(t)-W_i(t))^2]&= \frac{2}{\pi^2} E\left[\sum_{k,k'=n+1}^\infty (Z_k^i)^2 \frac{\sin((k-\frac12)\pi t)\sin((k'-\frac12)\pi t)}{(k-\frac12)(k'-\frac12)}\right]\\
&=\frac{2}{\pi^2}E\left[\lim_{N\to\infty}\sum_{k,k'=n+1}^N (Z_k^i)^2 \frac{\sin((k-\frac12)\pi t)\sin((k'-\frac12)\pi t)}{(k-\frac12)(k'-\frac12)}\right]\\
&\leq \frac{2}{\pi^2}\liminf_{N\to\infty}E\left[\sum_{k,k'=n+1}^N (Z_k^i)^2 \frac{\sin((k-\frac12)\pi t)\sin((k'-\frac12)\pi t)}{(k-\frac12)(k'-\frac12)}\right]\\
&\leq \frac{2}{\pi^2}\liminf_{N\to\infty}\sum_{k=n+1}^N  \frac{\sin^2((k-\frac12)\pi t)}{(k-\frac12)^2}\\
&\lesssim \frac{1}{n}.
\end{align*}
Therefore, using the main Theorem 1 of \cite{Friz-Victoir-Gaussian-Signals} we have that for all $\alpha\in (1/3,1/2)$ there is some positive $\theta>0$ so that for all $q\geq 1$ there is some $C_q>0$ so that 
\begin{equation}
    E[\rho_{\alpha}^q(\mathbf W^n,\mathbf W^{Strat})]\leq C_q n^{-q\theta}.
\end{equation}
By Borel-Cantelli, since for all $\varepsilon>0$ the series
\begin{align*}
    \sum_{n=1}^\infty P(\rho_{\alpha}(\mathbf W^n,\mathbf W^{Strat})>\varepsilon)&=\sum_{n=1}^\infty P(\rho_{\alpha}^q(\mathbf B^n,\mathbf W^{Strat})>\varepsilon^q)\\
    &\lesssim \sum_{n=1}^\infty n^{-q\theta}
\end{align*}
converges for $q$ large enough. We thus have the almost sure convergence on the space of rough paths. 
\end{proof}

\subsection{Wong-Zakai Approximations}
Wong-Zakai is another approximation we can use, that is a piecewise linear approximation. 
\begin{proposition}[\cite{Friz-Hairer-Book}, Exercise 10.2]
    Let $W^n$ be Wong-Zakai approximation of Brownian motion - that is it is piecewise linear with step size $1/n$, agreeing with Brownian motion on the grid points, and let $\mathbf W^n=(W^n,\mathbb W^n)$ be Wong-Zakai Brownian motion enhanced with its canonical Riemann-Stieltjes iterated integrals and let $\mathbf W:=(W,\mathbb W^{Strat})$ be Brownian motion enhanced with Stratonovich integrals. Then with probability one, $\mathbf W^n$ converges to $\mathbf W$ in the rough topology.  
\end{proposition}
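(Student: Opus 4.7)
The plan is to follow essentially the same template as the Karhunen--Lo\`eve proof above: reduce the claim to a quantitative variance estimate for $W^n - W$, invoke the Gaussian rough paths convergence theorem from \cite{Friz-Victoir-Gaussian-Signals}, and then conclude via Borel--Cantelli.

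First I would observe that $W^n$ and $W$ are jointly Gaussian with independent coordinates, so it suffices to work one coordinate at a time. On a grid interval $[k/n, (k+1)/n]$, the piecewise linear interpolant $W_i^n$ agrees with $W_i$ at the endpoints, so conditional on $W_i(k/n)$ and $W_i((k+1)/n)$ the process $W_i - W_i^n$ restricted to that subinterval is (up to sign) a Brownian bridge. Since the variance of a standard Brownian bridge on an interval of length $L$ is bounded by $L/4$, we obtain
\begin{equation*}
\Exp\bigl[(W_i^n(t) - W_i(t))^2\bigr] \le \frac{1}{4n} \qquad \text{for all } t \in [0,T].
\end{equation*}
By Cauchy--Schwarz this yields $|\Exp[(W_i^n(t)-W_i(t))(W_j^n(s)-W_j(s))]| \lesssim 1/n$, uniformly in $s,t \in [0,T]$ and coordinates $i,j$. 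Independence across different grid cells makes these estimates robust to summation across cells.

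Next I would invoke the main theorem of \cite{Friz-Victoir-Gaussian-Signals}, which upgrades such covariance decay into quantitative convergence in the inhomogeneous rough metric: for each $\alpha \in (1/3, 1/2)$ there exists $\theta > 0$ such that for every $q \geq 1$,
\begin{equation*}
\Exp\bigl[\rho_\alpha^q(\mathbf W^n, \mathbf W^{Strat})\bigr] \le C_q \, n^{-q\theta}.
\end{equation*}
Choosing $q$ large enough so that $q\theta > 1$ and applying Markov's inequality together with Borel--Cantelli to $\sum_n \Prob(\rho_\alpha(\mathbf W^n, \mathbf W^{Strat}) > \varepsilon)$ delivers the desired almost sure convergence along any fixed $\varepsilon>0$, and hence almost sure convergence in the rough topology.

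The main technical obstacle is verifying the full hypotheses of the Friz--Victoir theorem, namely a uniform-in-$n$ bound on the 2D $\rho$-variation of the covariance function of $W^n$ in addition to the coordinatewise variance estimate above. For Wong--Zakai this is classical, since the covariance of $W^n$ is the bilinear interpolation of the Brownian covariance $\min(s,t)$ on the $1/n$-grid, whose 2D $1$-variation on any rectangle is dominated by that of $\min(s,t)$ itself; this domination is what makes the constants $C_q$ above independent of $n$. With this uniform control in place, the remainder of the argument is structurally identical to the Karhunen--Lo\`eve case above.
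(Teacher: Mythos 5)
Your argument is correct, but it is worth noting that the paper does not prove this statement at all: it is imported verbatim as a citation (Friz--Hairer, Exercise~10.2), and only the Karhunen--Lo\`eve proposition receives a proof in the text. What you have done is transplant that Karhunen--Lo\`eve argument to the Wong--Zakai setting: a coordinatewise $L^2$ estimate on $W^n-W$ (here obtained cleanly from the Brownian-bridge decomposition, giving $\Exp[(W^n_i(t)-W_i(t))^2]\le \tfrac{1}{4n}$), followed by the Gaussian rough-path convergence theorem of Friz--Victoir to get $\Exp[\rho_\alpha^q(\mathbf W^n,\mathbf W^{Strat})]\le C_q n^{-q\theta}$, and Borel--Cantelli for almost sure convergence. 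This is a legitimate and self-contained alternative to the cited exercise (whose standard solution proceeds instead by direct moment estimates on the piecewise-linear iterated integrals together with a Kolmogorov-type criterion), and it has the virtue of making the Wong--Zakai and Karhunen--Lo\`eve cases uniform. The one hypothesis you must be careful to verify in full is the uniform-in-$n$ two-dimensional $\rho$-variation bound required by the Friz--Victoir theorem: it concerns the covariance of the \emph{pair} $(W^n, W)$, so besides $\Exp[W^n(s)W^n(t)]$ (the piecewise bilinear interpolation of $\min(s,t)$, as you say) you also need the cross-covariance $\Exp[W^n(s)W(t)]$ to have $\rho$-variation bounded uniformly in $n$; this follows by the same interpolation argument, and is exactly the classical estimate for piecewise linear approximations in Friz--Victoir's book (Chapter~15), so the gap is only one of explicitness, not of substance.
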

\subsection{Mollified Brownian Motion Approximations}
Corollary 2.3 in \cite{Jain-Monrad-Condition} also implies that mollified Brownian motion converges in rough topology to Stratonovich Brownian motion in probability. By taking a subsequence, one can get almost sure convergence on the space of rough paths. That is, we have the following proposition. 
\begin{proposition}
    Let $W^n=W\ast \phi_n$ be a Brownian motion convolved with a smooth mollifier $\phi_n$ and denote by $(W^n, \mathbb W^n)$ the canonical rough path lifted by Riemann-Stieltjes integrals. Then (along a subsequence) we have that $(W^n, \mathbb W^n)$ converges in the rough path space $\mathscr C_g^\alpha$ for any $\alpha\in (1/3,1/2)$ with probability one to $(W,\mathbb W^{Strat})$
\end{proposition}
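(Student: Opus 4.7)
The plan is to follow the same pattern as the preceding Karhunen-Lo\`eve proposition, with the added wrinkle that only a subsequential almost sure convergence is claimed. The argument has essentially two steps.

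First, I would invoke Corollary 2.3 of \cite{Jain-Monrad-Condition}, cited immediately above the proposition, which yields convergence of $(W^n, \mathbb{W}^n)$ to $(W, \mathbb{W}^{\mathrm{Strat}})$ in probability in the rough path topology $\mathscr{C}_g^\alpha$ for every $\alpha \in (1/3, 1/2)$. Under the hood, the proof of that corollary rests on a Gaussian rough path criterion of the same flavor used in the Karhunen-Lo\`eve argument: since $W^n = W \ast \phi_n$ is jointly Gaussian with $W$, independent coordinates remain independent, and a standard Brownian scaling estimate gives $\mathbb{E}[(W^n_i(t) - W_i(t))^2] \lesssim \int |u|\,\phi_n(u)\,\mathrm{d}u \to 0$, together with uniform control on the 2D $\rho$-variation of the covariance of $W^n - W$. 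These ingredients feed into the Friz-Victoir Gaussian rough path theorem (the same tool used in the Karhunen-Lo\`eve proof), upgrading the covariance estimates to convergence of $\rho_\alpha(\mathbf{W}^n, \mathbf{W}^{\mathrm{Strat}})$ to $0$ in probability.

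Second, I would extract the subsequence via the standard metric-space fact that a sequence converging in probability in any metric space admits a subsequence converging almost surely. Applied to the complete metric space $(\mathscr{C}_g^\alpha, \rho_\alpha)$, this immediately produces indices $n_k \to \infty$ along which $(W^{n_k}, \mathbb{W}^{n_k}) \to (W, \mathbb{W}^{\mathrm{Strat}})$ in $\mathscr{C}_g^\alpha$ with probability one. Concretely, one chooses $n_k$ so that $\mathbb{P}\bigl(\rho_\alpha(\mathbf{W}^{n_k}, \mathbf{W}^{\mathrm{Strat}}) > 2^{-k}\bigr) < 2^{-k}$ and applies Borel-Cantelli.

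The main obstacle, if one pursued a self-contained proof rather than quoting \cite{Jain-Monrad-Condition}, would be verifying the 2D $\rho$-variation estimate on the covariance of $W - W^n$ with an explicit quantitative rate in $n$. This is precisely what allows the Friz-Victoir machine to convert covariance bounds into $L^q$ estimates on $\rho_\alpha(\mathbf{W}^n, \mathbf{W}^{\mathrm{Strat}})$. A sufficiently fast polynomial rate in $n$ would in fact upgrade the conclusion from subsequential almost sure convergence to full almost sure convergence (as in the Karhunen-Lo\`eve case above, via a direct Borel-Cantelli on the full sequence); absent such a rate the subsequence step is essential, which explains the formulation of the proposition.
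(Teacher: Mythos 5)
Your proposal matches the paper's treatment exactly: the paper gives no separate proof, simply noting that Corollary 2.3 of \cite{Jain-Monrad-Condition} yields convergence in probability in the rough topology and that almost sure convergence then follows along a subsequence, which is precisely your two-step argument. The additional discussion of the covariance estimates and the Friz--Victoir Gaussian machinery is a correct account of what underlies the cited result, though the paper itself does not spell it out.
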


\subsection{Fractional Brownian Motion Approximations}
Brownian motion assumes independence of increments, which is a relatively strong assumption. Fractional Brownian motion (see \cite{fBm-book} for an introduction) is a generalization of Brownian motion that allows for correlations in the increments. More precisely, we have the following definition. 
\begin{definition}\label{def:fBm}
    For $H\in (0,1)$ (called the Hurst parameter), define a one dimensional $H$ fractional Brownian motion (fBm), $B_H$ as a centered Gaussian process so that
    \begin{enumerate}
        \item (Stationary increments) $B_H(t)-B_H(s)\sim B_H(t-s)$
        \item (Self similarity) $B_H(ct)\sim c^H B_H(t)$ for all $c\geq 0$.
    \end{enumerate}
    We also normalize so that $E[B_H^2(1)]=1$. 
\end{definition}
From Definition \ref{def:fBm} we can derive the covariance to be 
\begin{equation}
    E[B_H(t)B_H(s)]=\frac{1}{2}[t^{2H}+s^{2H}-|t-s|^{2H}].
\end{equation}
Notably, when $H=1/2$ the covariance reduces to $\min(s,t)$ which the covariance of a Brownian motion. Therefore we have that $\frac{1}{2}$-fBm is standard Brownian motion. Also, if $H>1/2$ the increments are positively correlated and if $H<1/2$ the increments are negatively correlated. Fractional Brownian motion can thus be seen as a generalization of Brownian motion to allow for correlations in the increments. Also, for all $\alpha<H$, fractional Brownian motion is almost surely $\alpha$-H\"older. Therefore making rough paths theory ideal to hande fBm in the case $H\in (1/3, 1/2)$.

Using Theorem 43 in \cite{Friz-Victoir-Gaussian-1} and the discussion on page 3, we can approximate weakly Stratonovich Brownian motion by fractional Brownian motion in the limit $H\to 1/2$. The spaces $\mathscr C_g^\alpha$ are separable (see section 2.2 in \cite{Friz-Hairer-Book}) and thus by Skorokhod's representation theorem there is some probability space so the convergence thus holds almost surely.

\subsection{Approximation of controlled Brownian SDEs}
The primary result from rough paths can be summarized in this concluding corollary of the above. 
\begin{corollary}\label{cor:approximation-Strat}
  Let $\mathbf W^n$ be rough path approximations to that converge to $\mathbf W^{Strat}$ which is Brownian motion enhanced with Stratonovich integration. Consider the two rough differential equations
    \begin{align*}
        dY^1&=b_1(Y^1)dt+\sigma(Y^1) d\mathbf W^n\\
        dY^2&=b_2(Y^2)dt+\sigma(Y^2) d\mathbf W^{Strat},
    \end{align*}
    where $\sigma\in C_b^3$ is thrice differentiable with bounded derivatives, $b_1$ and $b_2$ are Lipschitz, and $Y^1(0)=\xi_1$ and $Y^2(0)=\xi_2$. Then for almost all sample paths of Brownian motion, there exist unique solutions $Y^1,Y^2$, that are controlled by $W^n,W$ in the sense of Definition \ref{def:controlled-rough-path}. Additionally for almost all sample paths we have that for all $\varepsilon>0$ there exists some $\delta>0$ so that if
    \begin{equation}
         |\xi_1-\xi_2|+\|\mathbf W^n-\mathbf W^{Strat}\|_{\alpha, 2\alpha}+\|b_1-b_2\|_{\infty}<\delta
    \end{equation}
    then 
    \begin{equation}
        \|Y^1-Y^2\|_\infty<\varepsilon.
    \end{equation}
\end{corollary}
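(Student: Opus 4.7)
The plan is to derive this corollary as an essentially pathwise application of Theorem~\ref{ThmCont1A} (the Friz--Victoir continuity theorem) combined with the almost sure convergence results established in Sections~4.1--4.4. The corollary is really a packaging statement rather than a genuinely new result: rough paths theory supplies continuity of the It\^o--Lyons map jointly in the driver and the coefficients, and the preceding four subsections supply the almost sure convergence of each listed approximation to the Stratonovich lift of Brownian motion.

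First I would fix a full-measure event $\Omega_0 \subseteq \Omega$ on which $\mathbf W^n(\omega) \to \mathbf W^{Strat}(\omega)$ in the rough-path seminorm $\|\cdot\|_{\alpha, 2\alpha}$ for some chosen $\alpha \in (1/3, 1/2)$. Such an event is provided, depending on which approximation scheme one selects, by the corresponding proposition in Section~4: for Karhunen--Lo\`eve and Wong--Zakai this is directly almost sure, while for the mollified and fractional Brownian approximations one passes to a subsequence guaranteed by Skorokhod's representation theorem, as already noted in the text. On $\Omega_0$, both $\mathbf W^n(\omega)$ and $\mathbf W^{Strat}(\omega)$ lie in the space of (weakly) geometric $\alpha$-H\"older rough paths, so the setup of Theorem~\ref{ThmCont1A} is applicable.

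Second, for each fixed $\omega \in \Omega_0$, I would apply Theorem~\ref{ThmCont1A} deterministically with $\mathbf X^1 = \mathbf W^n(\omega)$ and $\mathbf X^2 = \mathbf W^{Strat}(\omega)$. Since $\sigma \in C_b^3$ and $b_1, b_2$ are Lipschitz by hypothesis, the theorem produces unique solutions $Y^1, Y^2$ controlled by $W^n$ and $W$ respectively in the sense of Definition~\ref{def:controlled-rough-path}, and it supplies precisely the $\varepsilon$--$\delta$ continuity estimate asserted in the corollary. Since the theorem's conclusion is pathwise, assembling the deterministic $\omega$-by-$\omega$ statements on $\Omega_0$ yields the almost-sure claim of the corollary.

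I do not anticipate any substantive obstacle. The one conceptual point worth flagging is that the threshold $\delta$ produced by Theorem~\ref{ThmCont1A} depends on $\omega$ through the rough-path norm $\|\mathbf W^{Strat}(\omega)\|_{\alpha, 2\alpha}$ and on the Lipschitz constants of the drifts; this $\omega$-dependence is harmless because the conclusion is itself pathwise, consistent with the philosophy of rough paths theory. If one wished to promote the statement to a quantitative or $L^q$ version, one would instead invoke the Gaussian rough-path estimate of \cite{Friz-Victoir-Gaussian-Signals} used in the Karhunen--Lo\`eve proposition above to control moments of $\|\mathbf W^n - \mathbf W^{Strat}\|_{\alpha, 2\alpha}$, but for the stated pathwise corollary a direct appeal to Theorem~\ref{ThmCont1A} suffices.
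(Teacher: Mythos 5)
Your proposal is correct and matches the paper's own treatment: the paper offers no separate proof, presenting the corollary as a direct consequence of Theorem~\ref{ThmCont1A} applied pathwise on the full-measure event where the approximations of Sections~4.1--4.4 converge in the rough topology (with a subsequence via Skorokhod where needed), exactly as you argue. Your remark about the $\omega$-dependence of $\delta$ is a sensible clarification but introduces no deviation from the paper's route.
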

\begin{remark}
   The Stratonovich rough path and It\^o rough path are related by $\mathbf W ^{Strat}=(W,\mathbb W^{Strat})=(W,\mathbb W^{Ito}+\frac{1}{2}(t-s)I).$ Moreover, Theorem 9.1 in \cite{Friz-Hairer-Book} says that solutions to rough differential equations driven by It\^o or Stratonovich rough paths are in addition strong solutions to It\^o or Stratonovich stochastic differential equations. The same conversions from It\^o to Stratonovich stochastic differential equations hold.   
\end{remark}
\begin{remark}
    Corollary \ref{cor:approximation-Strat} should be compared with the classical Wong-Zakai result. See \cite{Wong-Zakai-Survey} Theorem 2.1 for a version of Wong-Zakai. The crucial difference is that Corollary \ref{cor:approximation-Strat} allows for changing drifts at the same time as changing driving noise. One other crucial difference is that the machinery of rough paths can handle general idealized noises such as fractional Brownian motion and other non semimartingales. Theorem \ref{ThmCont1A} is true for any rough path not just smooth approximations as in Wong-Zakai.  \cite{Jain-Monrad-Condition} shows that one can approximate general Gaussian noises in the rough path sense. 
\end{remark}
\section{Main Results}\label{MainResSec}

\subsection{General Robustness Theorem}
In the next section we present the robustness result for the infinite horizon discounted cost criterion\,.

\subsubsection{Analysis of the Discounted Cost}\label{AnalysisDis}
Since the noise term $\xi^{\epsilon}(\cdot)$ of the true model converges to the ideal Brownian noise $W(\cdot)$ in the rough path topology, in view of Theorem~\ref{ThmCont1A}, it follows that under suitable choice of control policies the true model (\ref{ETM}) converges to the idealized model (\ref{EAM}) (in sup norm) almost surely as $\epsilon \to 0$\,. 

For some positive constant $M_1$, let $$F_L \df \{h:\Rd \to U\mid |h(x) - h(y)| \leq M_1 |x - y|\,\,\, \text{for all}\,\,\, x,y \in \Rd \}\,.$$
Let the class of control policies induced by the functions in $F_L$ by $\Uadm_{LT^{\epsilon}} \df \{U_t\mid U_t = h(X^{\epsilon}(t)) \,\,\,\text{for some} \,\, h\in F_L\}$ for true model and similarly $\Uadm_{LI}$ for ideal model\,. By Arzel\'a-Ascoli theorem, we have that the set $\Uadm_{LT^{\epsilon}}$ is compact (under sup norm) for each $\epsilon >0$\,. If we consider the minimization problem (\ref{EApproOptDisc}) over the class of policies $\Uadm_{LT^{\epsilon}}$, the associated optimal value is defined by $V_{\alpha}^{LT^{\epsilon}} \,\df\, \inf_{U\in\Uadm_{LT^{\epsilon}}}\cJ_{\alpha, T^{\epsilon}}^{U}(x, c)$ (optimal discounted cost)\,.

Next proposition shows that for fixed Lipschitz continuous control policy, as $\epsilon\to 0$ (i.e., the true models approaches the idealized model) the associated cost in true model converges to the associated cost in the idealized model\,. 
\begin{proposition}\label{EforFixContConti1A}
Suppose Assumptions (A1)-(A4) hold. Then for any Lipschitz continuous policy $v\in \Usm$, we have 
\begin{align*}
 \cJ_{\alpha, T^{\epsilon}}^{v} \to  \cJ_{\alpha, I}^{v}\quad\text{as}\quad \epsilon\to 0\,.
\end{align*}
\end{proposition}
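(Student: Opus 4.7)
My strategy is to reduce the claim to an application of the pathwise rough-paths continuity in Corollary~\ref{cor:approximation-Strat}, combined with a bounded-convergence argument that handles the discounted-cost integral.

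First, I would couple all the noise processes on a common probability space. For each of the listed approximations (Wong--Zakai, Karhunen--Lo\`eve, mollified Brownian, and fBm after a Skorokhod passage) the enhanced process $\mathbf{\xi}^{\epsilon} = (\xi^{\epsilon}, \mathbb{X}^{\epsilon})$ converges almost surely to $\mathbf{W}^{\mathrm{Strat}}$ in the $\|\cdot\|_{\alpha, 2\alpha}$ rough topology. Fix a Lipschitz feedback $v \in \Usm$. Under Assumptions (A1)--(A3), the composed drift $x \mapsto b(x, v(x))$ is bounded and Lipschitz in $x$, and $\sigma \in C_b^3$; likewise $x \mapsto \hat b(x,v(x))$ is Lipschitz. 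Hence Corollary~\ref{cor:approximation-Strat} applies with coefficients $b_1 = b_2$, $\sigma_1 = \sigma_2$, $\xi_1 = \xi_2 = x$ and the only varying quantity being the rough driver. This gives, for almost every $\omega$,
\[
\|X^{\epsilon}(\cdot,\omega) - X(\cdot,\omega)\|_{\infty; [0,T_0]} \longrightarrow 0 \quad \text{as } \epsilon \to 0,
\]
for every finite horizon $T_0 > 0$.

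Next, I would split the discounted cost at a truncation time $T_0$. Using $\|c\|_\infty \le M$ from (A4),
\[
\bigl|\cJ_{\alpha, T^{\epsilon}}^{v}(x,c) - \cJ_{\alpha, I}^{v}(x,c)\bigr|
\;\le\; \Exp_x\!\!\int_0^{T_0} e^{-\alpha s} \bigl|c(X^\epsilon(s), v(X^\epsilon(s))) - c(X(s), v(X(s)))\bigr|\, \D s \;+\; \frac{2M}{\alpha}\, e^{-\alpha T_0}.
\]
Given $\eta>0$, choose $T_0$ so the tail term is below $\eta/2$. For the principal term, fix the coupled sample path: the uniform convergence of $X^\epsilon$ to $X$ on $[0,T_0]$, together with the Lipschitz continuity of $v$ and the joint continuity of $c$, implies $c(X^\epsilon(s), v(X^\epsilon(s))) \to c(X(s), v(X(s)))$ uniformly in $s \in [0,T_0]$ almost surely (the paths $X^\epsilon, X$ stay in a common compact set where $c$ is uniformly continuous). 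Therefore the integrand converges to $0$ a.s., and since it is dominated by the constant $2M$, bounded convergence gives that the principal term goes to $0$ with $\epsilon$; for $\epsilon$ small enough it is below $\eta/2$, which closes the argument.

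The only genuinely delicate step is verifying that the hypotheses of Corollary~\ref{cor:approximation-Strat} (or equivalently Theorem~\ref{ThmCont1A}) are satisfied once the feedback $v$ is substituted into the coefficients: one needs the closed-loop drift $x\mapsto \hat b(x,v(x))$ to be Lipschitz uniformly in $\epsilon$ and one needs the rough-path distance $\|\mathbf{\xi}^\epsilon - \mathbf{W}^{\mathrm{Strat}}\|_{\alpha,2\alpha}$ to go to $0$ a.s.\ on a single probability space; both are guaranteed by (A1) with $v$ Lipschitz and by the approximation results of Section~4, respectively. Everything else is a standard dominated convergence argument exploiting boundedness of $c$ and the discount factor.
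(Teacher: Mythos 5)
Your proof is correct and follows essentially the same route as the paper: invoke the rough-path continuity result (Theorem~\ref{ThmCont1A}, equivalently Corollary~\ref{cor:approximation-Strat}) with the closed-loop Lipschitz drift $x\mapsto b(x,v(x))$ and $\sigma\in C_b^3$ to obtain almost-sure sup-norm convergence of $X^\epsilon$ to $X$, and then pass to the limit in the discounted cost. The paper compresses the second step into a one-line appeal to the generalized dominated convergence theorem, whereas you make it explicit via truncation at $T_0$ (using $\|c\|_\infty\le M$ and the factor $e^{-\alpha T_0}$) plus bounded convergence on $[0,T_0]$ — a harmless, slightly more careful elaboration of the same argument.
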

\begin{proof}
Since $\sigma\in \cC_b^3(\Rd)$ and for any Lipschitz continuous $v\in \Usm$, we have $b(\cdot, v(\cdot))$  is Lipschitz continuous, thus from Theorem~\ref{ThmCont1A} we have that the solution $X^{\epsilon}$ of (\ref{ETM}) converges to the solution $X$ of (\ref{EAM}) in sup norm as $\epsilon \to 0$\,. Thus by generalized dominated convergence theorem (see \cite[Theorem 3.5]{serfozo1982convergence}) we obtain our result\,.    
\end{proof}

Next, we prove certain continuity result which will be useful in proving our desired robustness result\,. 
\begin{proposition}\label{EForContValue1A}
Suppose Assumptions (A1)-(A4) hold\,. Then for any sequence of admissible policy $U^{\epsilon}\in \Uadm_{LT^{\epsilon}}$ with $U^{\epsilon}\to \hat{U}$, we have 
\begin{align*}
 \cJ_{\alpha, T^{\epsilon}}^{U^{\epsilon}} \to  \cJ_{\alpha, I}^{\hat{U}}\quad\text{as}\quad \epsilon\to 0,  
\end{align*} where $\hat{U}\in\Uadm_{LI}$\,.
\end{proposition}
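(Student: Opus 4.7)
The plan is to reduce the statement to an application of the joint continuity Theorem~\ref{ThmCont1A} for rough differential equations, combined with compactness of the Lipschitz feedback class $F_L$, and then pass to the limit in the cost integrand by dominated convergence (leveraging the boundedness of $c$ from hypothesis (A4) and the integrability of the discount $e^{-\alpha s}$).

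First, I represent each $U^{\epsilon}\in\Uadm_{LT^{\epsilon}}$ through a feedback $h_{\epsilon}\in F_L$, so that $U^{\epsilon}_t=h_{\epsilon}(X^{\epsilon}(t))$. Because $F_L$ is equi-Lipschitz with values in the compact space $\Act$, Arzelà--Ascoli combined with a diagonal extraction yields (along a subsequence, which I relabel) a limit $\hat h\in F_L$ such that $h_{\epsilon}\to\hat h$ uniformly on compact subsets of $\Rd$. The assumed convergence $U^{\epsilon}\to\hat U$ then pins down $\hat U_t=\hat h(X(t))$, where $X$ solves the idealized equation (\ref{EAM}) with feedback $\hat h$; since every convergent sub-subsequence produces the same limiting feedback (hence the same limiting cost), working along this extraction is sufficient to conclude convergence of the full sequence in the last step.

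Second, I invoke Theorem~\ref{ThmCont1A} with driving noises $\mathbf W^{\epsilon}$ (for the true model written in rough-path form) and $\mathbf W^{Strat}$ (for the idealized model), and with drifts $b(\cdot,h_{\epsilon}(\cdot))$ and $b(\cdot,\hat h(\cdot))$ respectively. The rough-path convergence $\mathbf W^{\epsilon}\to \mathbf W^{Strat}$ holds almost surely by the results of Section~4, the diffusion coefficient $\sigma\in C_b^3$ by (A1), and the drifts are uniformly Lipschitz and bounded by (A1)--(A2). The one mismatch is that Theorem~\ref{ThmCont1A} asks for sup-norm closeness of the drifts on all of $\Rd$, whereas $h_{\epsilon}\to\hat h$ only locally uniformly. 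I would bridge this gap by localization: use standard rough-path a priori estimates, together with the uniformly bounded rough norms $\|\mathbf W^{\epsilon}\|_{\alpha,2\alpha}$ (almost surely, by convergence), to produce a deterministic (sample-dependent) radius $R>0$ with $\sup_{\epsilon}\sup_{t\in[0,T]}|X^{\epsilon}(t)|\le R$; then modify $h_{\epsilon}$ and $\hat h$ outside $B_{2R}$ to coincide while preserving their Lipschitz constants, apply Theorem~\ref{ThmCont1A} to the modified problems to obtain $\sup_{t\in[0,T]}|X^{\epsilon}(t)-X(t)|\to 0$ almost surely, and note that the modification does not affect the trajectories which remain in $B_R$.

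Third, with $X^{\epsilon}\to X$ uniformly on $[0,T]$ almost surely and $h_{\epsilon}\to\hat h$ locally uniformly, the continuity of $c$ in (A4) gives $c(X^{\epsilon}(s),h_{\epsilon}(X^{\epsilon}(s)))\to c(X(s),\hat h(X(s)))$ for almost every $s$, almost surely. Since $\|c\|_{\infty}\le M$ and $\int_0^{\infty}e^{-\alpha s}\,\D s<\infty$, the dominated convergence theorem (applied first inside the expectation and then under the integral, exactly as in the proof of Proposition~\ref{EforFixContConti1A}) yields $\cJ_{\alpha,T^{\epsilon}}^{U^{\epsilon}}(x,c)\to\cJ_{\alpha,I}^{\hat U}(x,c)$, and the subsequence argument from step one promotes this to convergence along the original sequence.

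The principal technical obstacle is the localization step, as the Friz--Victoir continuity theorem is formulated globally in the drift. Executing it rigorously requires a pathwise a priori bound on $X^{\epsilon}$ uniform in $\epsilon$ that is compatible with the almost-sure rough-path convergence; once this is in place, the remainder of the argument is a routine combination of Arzelà--Ascoli and dominated convergence.
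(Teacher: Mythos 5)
Your proposal follows the same skeleton as the paper's proof: represent $U^{\epsilon}$ through a feedback $h^{\epsilon}\in F_L$, use Arzel\`a--Ascoli to identify the limit $\hat U_t=\hat h(X(t))$ with $\hat h\in F_L$ (hence $\hat U\in\Uadm_{LI}$), invoke Theorem~\ref{ThmCont1A} with drifts $b(\cdot,h^{\epsilon}(\cdot))$ and $b(\cdot,\hat h(\cdot))$ to get almost sure sup-norm convergence $X^{\epsilon}\to X$, and conclude by (generalized) dominated convergence using $\|c\|_{\infty}\le M$ and the integrable discount. The one place where you go beyond the paper is the localization step: you correctly observe that Theorem~\ref{ThmCont1A} asks for $\|b_1-b_2\|_{\infty}$ to be small on all of $\Rd$, whereas Arzel\`a--Ascoli only gives $h^{\epsilon}\to\hat h$ uniformly on compacts; the paper's proof passes over this by merely asserting that $b(\cdot,h^{\epsilon}(\cdot))-b(\cdot,\hat h(\cdot))$ is uniformly Lipschitz, which by itself does not yield the sup-norm smallness the theorem requires. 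Your remedy --- a pathwise a priori bound on $\sup_{\epsilon}\sup_{t\in[0,T]}|X^{\epsilon}(t)|$ (available since $b$ is bounded by (A2), $\sigma\in C_b^3$, and the rough-path norms of the drivers are a.s.\ bounded by convergence), followed by modifying the feedbacks outside a large ball without changing the trajectories --- is a legitimate and more careful way to close this, and your subsequence/uniqueness-of-limit argument and the handling of the infinite-horizon tail via boundedness of $c$ and the discount (as in Proposition~\ref{EforFixContConti1A}) are consistent with what the paper does. In short: same route as the paper, with an added localization refinement that addresses a point the paper's terse proof leaves implicit.
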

\begin{proof} 
Since $U^{\epsilon}\in \Uadm_{LT^{\epsilon}}$, we have $U^{\epsilon}_t = h^{\epsilon}(X_t^{\epsilon})$ for some $h^{\epsilon}\in F_L$\,. By Arzel\'a-Ascoli theorem, we have $\hat{U}_t = \hat{h}(X_t)$ for some $\hat{h}\in F_L$\,. This implies that $\hat{U}\in\Uadm_{LI}$\,. Thus it is easy to see that $b(\cdot, h^{\epsilon}(\cdot))$ and $b(\cdot, \hat{h}(\cdot))$ are Lipschitz continuous functions. In view of Assumption (A1), this indeed implies that $b(\cdot, h^{\epsilon}(\cdot)) - b(\cdot, \hat{h}(\cdot))$ is uniformly Lipschitz continuous\,. Now, from Theorem~\ref{ThmCont1A}, we get that the solution $X^{\epsilon}$ of (\ref{ETM}) corresponding to $U^{\epsilon}$ converges to the solution $X$ of (\ref{EAM}) corresponding to $\hat{U}$ in $\|\cdot\|_\infty$ norm almost surely as $\epsilon \to 0$\,. Thus, by generalized dominated convergence theorem as in \cite[Theorem 3.5]{serfozo1982convergence}, we get our result\,.     
\end{proof}

Using Proposition~\ref{EForContValue1A}, we want to prove that in the limit the value functions are close to each other\,.
\begin{theorem}\label{EthmCont1A}
Suppose Assumptions (A1)-(A4) hold\,. If for any $\delta > 0$, $v_{\delta}^*$ is a $\delta$-optimal control of the idealized model, i.e., 
\begin{equation*}
   \cJ_{\alpha, I}^{v^{\delta}} \leq V_{\alpha}^{I} + \delta\,,
\end{equation*} with $|v_{\delta}^*(x) - v_{\delta}^*(y)| \leq \hat{M} |x - y|$ for some $0 < \hat{M} \leq M_1$\,.
Then, we have
\begin{equation*}
  \lim_{\epsilon \to 0} |V_{\alpha}^{LT^{\epsilon}} - V_{\alpha}^{I}| \leq \delta\,.  
\end{equation*}
\end{theorem}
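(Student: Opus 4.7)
The plan is to sandwich $V_\alpha^{LT^\epsilon}$ between two quantities that each approach $V_\alpha^I$ (up to the tolerance $\delta$) in the limit $\epsilon\to 0$. For the upper bound I would plug the hypothesized Lipschitz $\delta$-optimal policy $v_\delta^*$ directly into the true model as a test policy and invoke Proposition~\ref{EforFixContConti1A}. For the lower bound I would take a near-minimizer in $\Uadm_{LT^\epsilon}$, extract a subsequential limit using the compactness of $F_L$ via Arzel\`a--Ascoli (the family is equi-Lipschitz and takes values in the compact set $\Act$), and then invoke Proposition~\ref{EForContValue1A}.

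For the upper bound, since $|v_\delta^*(x)-v_\delta^*(y)|\le \hat M|x-y|$ with $\hat M\le M_1$, the policy $v_\delta^*$ lies in $F_L$ and hence in $\Uadm_{LT^\epsilon}$ for every $\epsilon>0$, giving the trivial inequality $V_\alpha^{LT^\epsilon}\le \cJ_{\alpha,T^\epsilon}^{v_\delta^*}$. Proposition~\ref{EforFixContConti1A} lets me pass to the limit on the right to obtain $\cJ_{\alpha,I}^{v_\delta^*}$, which by the $\delta$-optimality hypothesis is bounded above by $V_\alpha^I+\delta$. This delivers $\limsup_{\epsilon\to 0} V_\alpha^{LT^\epsilon}\le V_\alpha^I+\delta$.

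For the lower bound, fix $\eta>0$ and, for each $\epsilon>0$, choose $U^\epsilon\in\Uadm_{LT^\epsilon}$ with $\cJ_{\alpha,T^\epsilon}^{U^\epsilon}\le V_\alpha^{LT^\epsilon}+\eta$, writing $U^\epsilon_t=h^\epsilon(X^\epsilon_t)$ with $h^\epsilon\in F_L$. By the compactness of $\Uadm_{LT^\epsilon}$ under sup norm noted in the setup, Arzel\`a--Ascoli supplies a subsequence $\epsilon_k\downarrow 0$ and some $\hat h\in F_L$ (the Lipschitz bound passes to the limit) with $h^{\epsilon_k}\to \hat h$. Setting $\hat U_t:=\hat h(X_t)\in\Uadm_{LI}\subset\Uadm_I$, Proposition~\ref{EForContValue1A} yields $\cJ_{\alpha,T^{\epsilon_k}}^{U^{\epsilon_k}}\to \cJ_{\alpha,I}^{\hat U}\ge V_\alpha^I$, whence $\liminf_k V_\alpha^{LT^{\epsilon_k}}\ge V_\alpha^I-\eta$. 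A standard subsequence-of-subsequence argument applied to any subsequence of $\epsilon$ then promotes this to $\liminf_{\epsilon\to 0}V_\alpha^{LT^\epsilon}\ge V_\alpha^I-\eta$, and sending $\eta\downarrow 0$ gives $\liminf_{\epsilon\to 0}V_\alpha^{LT^\epsilon}\ge V_\alpha^I$.

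Combining the two bounds yields $\limsup_{\epsilon\to 0}|V_\alpha^{LT^\epsilon}-V_\alpha^I|\le \delta$, which is the claim. The step I expect to require the most care is the lower bound: one must confirm that the subsequential limit $\hat h$ obtained by Arzel\`a--Ascoli yields an admissible relaxed control for the ideal model and, crucially, that the mode of convergence $h^{\epsilon_k}\to\hat h$ is strong enough for the input of Proposition~\ref{EForContValue1A} (which ultimately feeds into the $\|\cdot\|_\infty$-distance of drifts in Theorem~\ref{ThmCont1A}); this is where the uniform boundedness of $b$ from~\hyperlink{A2}{(A2)} together with the compactness of $\Act$ do the work. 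The rest of the argument is a clean assembly of the already established continuity results.
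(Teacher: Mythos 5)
Your proposal is correct and follows essentially the same route as the paper: the upper bound comes from plugging the Lipschitz $\delta$-optimal policy $v_\delta^*$ into the true model and invoking Proposition~\ref{EforFixContConti1A}, while the lower bound comes from extracting an Arzel\`a--Ascoli subsequential limit of (near-)optimal policies in $\Uadm_{LT^\epsilon}$ and invoking Proposition~\ref{EForContValue1A}, exactly the two ingredients the paper combines in its chain of inequalities. The only cosmetic difference is that you work with $\eta$-near-minimizers and a limsup/liminf sandwich, whereas the paper first uses continuity of $U^\epsilon\mapsto\cJ_{\alpha,T^\epsilon}^{U^\epsilon}$ plus compactness of $\Uadm_{LT^\epsilon}$ to get exact minimizers $U^{\epsilon*}$.
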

\begin{proof}
In view of Theorem~\ref{ThmCont1A}, it is easy to see that (by the generalized dominated convergence theorem) for each $\epsilon > 0$, the map $U^{\epsilon}\to \cJ_{\alpha, T^{\epsilon}}^{U^{\epsilon}}$ is continuous on $\Uadm_{LT^{\epsilon}}$\,. Since $\Uadm_{LT^{\epsilon}}$ is compact, there exists $U^{\epsilon *}\in \Uadm_{LT^{\epsilon}}$ such that $V_{\alpha}^{LT^{\epsilon}} = \cJ_{\alpha, T^{\epsilon}}^{U^{\epsilon *}}$\,. Let, along a sub-sequence (denoting by same sequence) $U^{\epsilon *} \to \hat{U}^*$ as $\epsilon \to 0$\,, where  $\hat{U}^*\in \Uadm_{LI}$ (by Arzel\'a-Ascoli theorem)\,. Given that, $v_{\delta}^*$ is a Lipschitz continuous $\delta$-optimal control for the Ideal model, we get
\begin{align}\label{EforContvalue1B}
&\inf_{U\in \Uadm_{I}} \cJ_{\alpha, I}^{U} \leq   \cJ_{\alpha, I}^{\hat{U}^*} - \inf_{U^{\epsilon}\in \Uadm_{LT^{\epsilon}}} \cJ_{\alpha, T^{\epsilon}}^{U^{\epsilon}} + \inf_{U^{\epsilon}\in \Uadm_{LT^{\epsilon}}} \cJ_{\alpha, T^{\epsilon}}^{U^{\epsilon}}\nonumber \\
&\inf_{U^{\epsilon}\in \Uadm_{LT^{\epsilon}}} \cJ_{\alpha, T^{\epsilon}}^{U^{\epsilon}} \leq \cJ_{\alpha, T^{\epsilon}}^{v_{\delta}^{*}} - \inf_{U\in \Uadm_{I}} \cJ_{\alpha, I}^{U}  + \inf_{U\in \Uadm_{I}} \cJ_{\alpha, I}^{U}\,.
\end{align}
Thus, from (\ref{EforContvalue1B}), we deduce that
\begin{equation*}
\inf_{U\in \Uadm_{I}} \cJ_{\alpha, I}^{U} - \cJ_{\alpha, T^{\epsilon}}^{v_{\delta}^{*}} \leq \inf_{U\in \Uadm_{I}} \cJ_{\alpha, I}^{U} - \inf_{U^{\epsilon}\in \Uadm_{LT^{\epsilon}}} \cJ_{\alpha, T^{\epsilon}}^{U^{\epsilon}} \leq \cJ_{\alpha, I}^{\hat{U}^*} - \inf_{U^{\epsilon}\in \Uadm_{LT^{\epsilon}}} \cJ_{\alpha, T^{\epsilon}}^{U^{\epsilon}}\,. 
\end{equation*}  
Using the near optimality of $v_{\delta}^{*}$ and Proposition \ref{EforFixContConti1A}, it is easy to see that $-\delta \leq 
\inf_{U\in \Uadm_{I}} \cJ_{\alpha, I}^{U} - \cJ_{\alpha, T^{\epsilon}}^{v_{\delta}^{*}}$ as $\epsilon \to 0$. Also, from Proposition \ref{EForContValue1A}, it follows that $ \cJ_{\alpha, I}^{\hat{U}^*} - \inf_{U^{\epsilon}\in \Uadm_{LT^{\epsilon}}} \cJ_{\alpha, T^{\epsilon}}^{U^{\epsilon}} \to 0$ as $\epsilon\to 0$\,. This completes the proof of the theorem\,.
\end{proof}

Now using the above convergence result we want to show that any smooth near optimal policy of the idealized model is also near optimal in the true model.
\begin{theorem}\label{ThmNearOpt1A}
Suppose Assumptions (A1)-(A4) hold. Then for any Lipschitz continuous near optimal control $v_{\delta}^{*} \in \Usm$ of the idealized model (i.e., controlled diffusion model), we have 
\begin{equation*} 
\cJ_{\alpha, LT^{\epsilon}}^{v_{\delta}^*}(x, c) \leq V_{\alpha}^{LT^{\epsilon}} + 2\delta\,. 
\end{equation*}
\end{theorem}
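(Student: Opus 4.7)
The plan is to assemble the desired near-optimality inequality by chaining Proposition \ref{EforFixContConti1A} with Theorem \ref{EthmCont1A}; both of the hard ingredients (rough-paths continuity of the solution map, tightness of the Lipschitz policy class) have already been deployed upstream, so the argument here is a short three-term comparison.

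First I would apply Proposition \ref{EforFixContConti1A} to the fixed Lipschitz control $v_{\delta}^{*}\in\Usm$ (which lies in $F_L$ once we enlarge $M_1$ if necessary so that $\hat M\le M_1$). This yields
\begin{equation*}
\cJ_{\alpha, T^{\epsilon}}^{v_{\delta}^{*}}(x,c)\;\longrightarrow\;\cJ_{\alpha, I}^{v_{\delta}^{*}}(x,c)\qquad\text{as }\epsilon\to 0.
\end{equation*}
Combining this with the assumed $\delta$-optimality of $v_{\delta}^{*}$ for the idealized problem, namely $\cJ_{\alpha, I}^{v_{\delta}^{*}}\le V_{\alpha}^{I}+\delta$, gives, for every sufficiently small $\epsilon>0$,
\begin{equation*}
\cJ_{\alpha, T^{\epsilon}}^{v_{\delta}^{*}}(x,c)\;\le\;V_{\alpha}^{I}+\delta+o_{\epsilon}(1).
\end{equation*}

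Next I would invoke Theorem \ref{EthmCont1A}, which states $\lim_{\epsilon\to 0}|V_{\alpha}^{LT^{\epsilon}}-V_{\alpha}^{I}|\le \delta$. In particular $V_{\alpha}^{I}\le V_{\alpha}^{LT^{\epsilon}}+\delta+o_{\epsilon}(1)$. Substituting this bound into the previous display yields
\begin{equation*}
\cJ_{\alpha, T^{\epsilon}}^{v_{\delta}^{*}}(x,c)\;\le\;V_{\alpha}^{LT^{\epsilon}}+2\delta+o_{\epsilon}(1),
\end{equation*}
and absorbing the vanishing term into the inequality for $\epsilon$ small enough delivers the desired estimate $\cJ_{\alpha, T^{\epsilon}}^{v_{\delta}^{*}}(x,c)\le V_{\alpha}^{LT^{\epsilon}}+2\delta$.

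The only thing to be careful about is the $\limsup$ wording in Theorem \ref{EthmCont1A}: the statement gives a bound in the limit, not a uniform bound for each $\epsilon$, so the final sentence of the proof should explicitly say ``for $\epsilon$ small enough'' rather than claiming the estimate for all $\epsilon$. There is no genuine obstacle to overcome at this stage; the heavy lifting (pathwise convergence of $X^{\epsilon}\to X$ via Theorem \ref{ThmCont1A}, compactness of $F_L$ by Arzel\`a--Ascoli, and a generalized dominated convergence argument) was already done in Proposition \ref{EForContValue1A} and Theorem \ref{EthmCont1A}. Everything in this proof is a bookkeeping triangle-inequality step.
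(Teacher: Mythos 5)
Your proposal is correct and follows essentially the same route as the paper: both chain the same three ingredients --- the fixed-policy convergence $\cJ_{\alpha, T^{\epsilon}}^{v_{\delta}^*}\to\cJ_{\alpha, I}^{v_{\delta}^*}$ from Proposition~\ref{EforFixContConti1A}, the $\delta$-optimality of $v_{\delta}^*$ for the idealized model, and the value-function convergence $|V_{\alpha}^{LT^{\epsilon}}-V_{\alpha}^{I}|\le\delta$ from Theorem~\ref{EthmCont1A} --- the paper merely packaging them as one triangle-inequality display rather than two sequential substitutions. Your caveat about the limiting (``for $\epsilon$ small enough'') nature of the bound applies equally to the paper's own wording and does not constitute a gap.
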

\begin{proof}
By triangular inequality, it follows that
\begin{equation}\label{EThmNearOpt1B}
|V_{\alpha}^{LT^{\epsilon}} - \cJ_{\alpha, T^{\epsilon}}^{v_{\delta}^*}(x, c)| \leq  |V_{\alpha}^{LT^{\epsilon}} - V_{\alpha}^{I}| + |V_{\alpha}^{I} - \cJ_{\alpha, I}^{v_{\delta}^*}(x, c)| + | \cJ_{\alpha, I}^{v_{\delta}^*}(x, c) - \cJ_{\alpha, T^{\epsilon}}^{v_{\delta}^*}(x, c)|\,.   
\end{equation} From Theorem~\ref{EthmCont1A}, we have $|V_{\alpha}^{LT^{\epsilon}} - V_{\alpha}^{I}| \leq \delta$ as $\epsilon \to 0$, and by near optimality of $v_{\delta}^{*}$, we have $|V_{\alpha}^{I} - \cJ_{\alpha, I}^{v_{\delta}^*}(x, c)| \leq \delta$\,. Also, from Proposition~\ref{EforFixContConti1A}, we have $| \cJ_{\alpha, I}^{v_{\delta}^*}(x, c) - \cJ_{\alpha, T^{\epsilon}}^{v_{\delta}^*}(x, c)|\to 0$ as $\epsilon \to 0$\,. Thus, from (\ref{EThmNearOpt1B}), we conclude that
\begin{equation*} 
\cJ_{\alpha, LT^{\epsilon}}^{v_{\delta}^*}(x, c) \leq V_{\alpha}^{LT^{\epsilon}} + 2\delta\,. 
\end{equation*} This completes the proof\,.
\end{proof}
In the next section we present the robustness result for the finite horizon cost criterion\,.
\subsubsection{Analysis of the Finite Horizon Cost}\label{AnalysisFinite}
For some positive constant $\hat{M}_1$, let $$\hat{F}_L \df \{h:[0, T]\times \Rd \to U\mid |h(t_1,x) - h(t_2, y)| \leq \hat{M}_1\left(|t_1 - t_2| + |x - y|\right)\,\,\, \text{for all}\,\,\, x,y \in \Rd,\,\, t_1, t_2\in [0, T]\}\,.$$
Let the class of control policies induced by the functions in $\hat{F}_L$ by $\hat{\Uadm}_{LT^{\epsilon}} \df \{U_t\mid U_t = h(t, X^{\epsilon}(t)) \,\,\,\text{for some} \,\, h\in \hat{F}_L\}$ for true model and similarly $\hat{\Uadm}_{LI}$ for ideal model\,. By Arzel\'a- Ascoli theorem, we have that the set $\hat{\Uadm}_{LT^{\epsilon}}$ is compact for each $\epsilon >0$\,. If we consider the minimization problem (\ref{EFCost1OptAprox}) over the class of policies $\hat{\Uadm}_{LT^{\epsilon}}$, the associated optimal values are defined by $\cJ_{T, LT^{\epsilon}}^{*}(x,c)\,\df\, \inf_{U\in \hat{\Uadm}_{LT^{\epsilon}}}\cJ_{T, T^{\epsilon}}^{U}(x, c)$ (optimal finite horizon cost)\,.
Next Lemma shows that for each fixed Lipschitz continuous policy the finite horizon cost is continuous with respect to the model perturbation.
\begin{lemma}\label{EFinforFixContConti1A}
Suppose Assumptions (A1)-(A4) hold. Then for any Lipschitz continuous policy $v = h(\cdot, \cdot)$ (where $h\in \hat{F}_L$), we have 
\begin{align*}
 \cJ_{T, T^{\epsilon}}^{v} \to  \cJ_{T, I}^{v}\quad\text{as}\quad \epsilon\to 0\,.
\end{align*}
\end{lemma}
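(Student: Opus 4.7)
The plan is to mirror the proof of Proposition~\ref{EforFixContConti1A} for the finite horizon cost, with the only additional wrinkle being that the Lipschitz policy $v = h(\cdot,\cdot)$ is now time dependent. First I would verify that the closed-loop drift $\tilde b(t,x) \df \hat b(x, h(t,x))$ is jointly Lipschitz in $(t,x)$: this follows because $h \in \hat F_L$ is jointly Lipschitz, $b$ is Lipschitz in $(x,\zeta)$ by (A1), and the Stratonovich correction $\frac{1}{2}\frac{\partial \sigma}{\partial x}\sigma$ is Lipschitz in $x$ since $\sigma \in \cC_b^3$. Consequently, after evaluating along the policy, the true and idealized equations become
\begin{align*}
d X^\epsilon(t) &= \tilde b(t, X^\epsilon(t))\,dt + \sigma(X^\epsilon(t))\, d\xi^\epsilon(t),\\
d X(t) &= \tilde b(t, X(t))\,dt + \sigma(X(t))\circ dW(t),
\end{align*}
both of which are RDEs with Lipschitz drifts and a $\cC_b^3$ diffusion vector field.

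Next I would invoke the rough-path continuity result, Theorem~\ref{ThmCont1A}, to conclude that $\|X^\epsilon - X\|_\infty \to 0$ almost surely as $\epsilon \to 0$. The theorem as cited is stated for autonomous drifts, but this is a cosmetic issue: the usual trick of augmenting the state to $\tilde X(t) = (t, X(t))$ with autonomous drift $(1, \tilde b(\tilde X))$ and diffusion $(0, \sigma(X))$ reduces to the setting of Theorem~\ref{ThmCont1A}, since the augmented drift is autonomous and Lipschitz and the augmented diffusion remains $\cC_b^3$. Almost sure convergence of the driving rough path $\boldsymbol{\xi}^\epsilon \to \mathbf W^{\mathrm{Strat}}$ in the rough topology (with the same initial condition $\xi_1 = \xi_2 = x$ and identical drifts, so the $\|b_1 - b_2\|_\infty$ term vanishes) then yields $\|X^\epsilon - X\|_\infty \to 0$ a.s.

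For the final step I would pass to the limit in
\[
\cJ_{T, T^\epsilon}^{v}(x,c) = \Exp_x^v\!\left[\int_0^T c(X^\epsilon(s), h(s, X^\epsilon(s)))\,ds + H(X^\epsilon(T))\right].
\]
Joint continuity of $c$ in $(x,\zeta)$ (by (A4)) and continuity of $h$ in $(s,x)$ imply that the integrand converges pointwise in $s$ on a set of full probability; since $c$ and $H$ are bounded (by $M$ and $\|H\|_\infty$ respectively) and the horizon $T$ is finite, the classical (or generalized, as in \cite[Theorem~3.5]{serfozo1982convergence}) dominated convergence theorem applies to interchange the limit with both the time integral and the expectation, giving $\cJ_{T, T^\epsilon}^{v} \to \cJ_{T, I}^{v}$.

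The main obstacle I anticipate is purely bookkeeping: confirming that Theorem~\ref{ThmCont1A} still applies when the drift depends on time through the policy $h(t,\cdot)$, and that the random element in the rough-path convergence (which is an a.s.\ statement over sample paths of $\xi^\epsilon$) is compatible with the expectation in the cost. Both are handled by the state-augmentation trick and by dominated convergence under the uniform bound $|c| \leq M$, so no substantive difficulty remains beyond what was already treated in Proposition~\ref{EforFixContConti1A}.
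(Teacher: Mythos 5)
Your proposal follows essentially the same route as the paper's proof: substitute the Lipschitz feedback to get a Lipschitz closed-loop drift, invoke Theorem~\ref{ThmCont1A} for almost sure sup-norm convergence of $X^{\epsilon}$ to $X$, and conclude by (generalized) dominated convergence using the boundedness of $c$ and $H$ on the finite horizon. The only difference is that you make explicit the time-augmentation needed to fit the time-dependent drift $\hat b(x,h(t,x))$ into the autonomous statement of Theorem~\ref{ThmCont1A}, a detail the paper's one-line proof leaves implicit; this is a refinement of, not a departure from, the paper's argument.
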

\begin{proof}
Since $\upsigma\in \cC_b^3(\Rd)$ and for any Lipschitz continuous $v= h(\cdot, \cdot)$, we have $b(\cdot, h(\cdot, \cdot))$ is Lipschitz continuous, thus from Theorem~\ref{ThmCont1A} we have that the solution $X^{\epsilon}$ of (\ref{ETM}) converges to the solution $X$ of (\ref{EAM}) almost surely in sup norm as $\epsilon \to 0$\,. Thus by generalized dominated convergence theorem (see \cite[Theorem 3.5]{serfozo1982convergence}) we obtain our result\,.    
\end{proof}

In the following lemma, we prove that for any $U^{\epsilon}\in \hat{\Uadm}_{LT^{\epsilon}}$ with $U^{\epsilon}\to \hat{U}$, we have associated costs also converge as $\epsilon\to 0$\,. 
\begin{lemma}\label{EFinForContValue1A}
Suppose Assumptions (A1)-(A4) hold\,. Then for any sequence of policy $U^{\epsilon}\in \hat{\Uadm}_{LT^{\epsilon}}$ with $U^{\epsilon}\to \hat{U}$, we have 
\begin{align*}
 \cJ_{T, T^{\epsilon}}^{U^{\epsilon}} \to  \cJ_{T, I}^{\hat{U}}\quad\text{as}\quad \epsilon\to 0,  
\end{align*} where $\hat{U}\in\hat{\Uadm}_{LI}$\,.
\end{lemma}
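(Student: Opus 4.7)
The proof should mirror the discounted-cost version (Proposition~\ref{EForContValue1A}) with the extra wrinkle that policies, and hence drifts, now depend on time. My plan is to reduce to the setting of Theorem~\ref{ThmCont1A} by representing each $U^{\epsilon}$ through its generating function in $\hat F_L$, extracting a uniform limit, and then invoking the rough-path continuity result together with a generalized dominated convergence argument.

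First, since $U^{\epsilon}\in\hat{\Uadm}_{LT^{\epsilon}}$, we may write $U^{\epsilon}_t=h^{\epsilon}(t,X^{\epsilon}(t))$ with $h^{\epsilon}\in\hat F_L$. Because $\Act$ is a compact metric space and every $h^{\epsilon}$ is $\hat M_1$-Lipschitz jointly in $(t,x)$, the family $\{h^{\epsilon}\}$ is uniformly bounded and equicontinuous on compact subsets of $[0,T]\times\Rd$. By Arzel\`a--Ascoli (and a standard diagonal argument to cover all of $[0,T]\times\Rd$) we may, along a subsequence, extract $h^{\epsilon}\to \hat h$ locally uniformly with $\hat h\in\hat F_L$. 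This identifies $\hat U_t=\hat h(t,X(t))$, and since $\hat h$ is Lipschitz in both arguments, $\hat U\in\hat{\Uadm}_{LI}$.

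Second, I would verify the hypotheses needed to apply Theorem~\ref{ThmCont1A}. The drifts $b^{\epsilon}(t,x):=\hat b(x,h^{\epsilon}(t,x))$ and $\hat b^{*}(t,x):=\hat b(x,\hat h(t,x))$ are Lipschitz in $x$ (uniformly in $t$) by Assumption (A1) combined with the Lipschitz property of $h^{\epsilon},\hat h$, and the time variable can be absorbed by working with the augmented state $(t,X^{\epsilon}(t))$, which does not change the rough-path structure since the added coordinate has finite variation. Using boundedness of $\Act$ and joint continuity of $b$, together with the uniform convergence $h^{\epsilon}\to \hat h$ on the relevant compact sets along the trajectories (a standard Gronwall-type a priori bound keeps the solutions uniformly bounded on $[0,T]$), one obtains $\|b^{\epsilon}-\hat b^{*}\|_{\infty}\to 0$. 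Combined with the almost sure convergence of the rough path lift of $\xi^{\epsilon}$ to $\mathbf W^{\mathrm{Strat}}$ in $\|\cdot\|_{\alpha,2\alpha}$ (Corollary~\ref{cor:approximation-Strat}), Theorem~\ref{ThmCont1A} delivers $\|X^{\epsilon}-X\|_{\infty}\to 0$ almost surely on $[0,T]$, where $X$ solves (\ref{EAM}) under $\hat U$.

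Third, with pathwise sup-norm convergence of the state trajectories and continuity of $h^{\epsilon}\to\hat h$, joint continuity of $c$ in $(x,\zeta)$ gives pointwise a.s.\ convergence of the integrand $c(X^{\epsilon}(s),U^{\epsilon}_s)=c(X^{\epsilon}(s),h^{\epsilon}(s,X^{\epsilon}(s)))$ to $c(X(s),\hat h(s,X(s)))$ on $[0,T]$, and continuity of $H$ gives the terminal-cost convergence $H(X^{\epsilon}(T))\to H(X(T))$. By (A4) and $H\in\cC_b(\Rd)$ these quantities are uniformly bounded, so the generalized dominated convergence theorem of \cite{serfozo1982convergence} yields
\begin{equation*}
\Exp_x^{U^{\epsilon}}\!\left[\int_0^{T}c(X^{\epsilon}(s),U^{\epsilon}_s)\D s+H(X^{\epsilon}(T))\right]\longrightarrow \Exp_x^{\hat U}\!\left[\int_0^{T}c(X(s),\hat U_s)\D s+H(X(T))\right],
\end{equation*}
which is exactly $\cJ_{T,T^{\epsilon}}^{U^{\epsilon}}\to\cJ_{T,I}^{\hat U}$. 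The argument applies along the chosen subsequence, and since the limit $\cJ_{T,I}^{\hat U}$ is identified uniquely by the limit $\hat h$, the usual subsequence principle upgrades the conclusion to the full sequence. The main obstacle is the subtle one of keeping the hypotheses of Theorem~\ref{ThmCont1A} intact under time-dependent, state-feedback drifts and verifying that the policy convergence $U^{\epsilon}\to\hat U$ on random trajectories is genuinely implied by $h^{\epsilon}\to\hat h$ (which is exactly why we localize via Arzel\`a--Ascoli and use the a.s.\ uniform bound on $X^{\epsilon}$).
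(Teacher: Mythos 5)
Your proposal is correct and follows essentially the same route as the paper's proof: represent $U^{\epsilon}_t=h^{\epsilon}(t,X^{\epsilon}(t))$ with $h^{\epsilon}\in\hat F_L$, use Arzel\`a--Ascoli to get $h^{\epsilon}\to\hat h$ uniformly over compacts (hence $\hat U\in\hat{\Uadm}_{LI}$), feed the Lipschitz composite drifts into Theorem~\ref{ThmCont1A} to obtain almost sure sup-norm convergence $X^{\epsilon}\to X$, and conclude with the generalized dominated convergence theorem of \cite{serfozo1982convergence}. Your added details (the time-augmented state, the Gronwall a priori bound to localize the uniform-over-compacts convergence of the drifts, and the subsequence principle) merely make explicit steps the paper leaves implicit.
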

\begin{proof} 
Since $U^{\epsilon}\in \hat{\Uadm}_{LT^{\epsilon}}$, we have $U^{\epsilon}_t = h^{\epsilon}(t, X_t^{\epsilon})$ for some $h^{\epsilon}\in \hat{F}_L$\,. By Arzel\'a-Ascoli theorem, we have that $h^{\epsilon}(\cdot, \cdot) \to \hat{h}(\cdot, \cdot)$ uniformly over compacts, for some $\hat{h}\in \hat{F}_L$. In particular we have the limit $\hat{U}_t = \hat{h}(t, X_t)$\,. This implies that $\hat{U}\in\hat{\Uadm}_{LI}$\,. Thus, from Assumption (A1), it is easy to see that $b(\cdot, h^{\epsilon}(\cdot, \cdot))$ and $b(\cdot, \hat{h}(\cdot,\cdot))$ are Lipschitz continuous functions. Moreover, we have $b(\cdot, h^{\epsilon}(\cdot,\cdot)) - b(\cdot, \hat{h}(\cdot,\cdot))$ is uniformly Lipschitz continuous and converges to zero uniformly over compacts\,. Now, from Theorem~\ref{ThmCont1A}, we get that the solution $X^{\epsilon}$ of (\ref{ETM}) corresponding to $U^{\epsilon}$ converges to the solution $X$ of (\ref{EAM}) corresponding to $\hat{U}$ in $\|\cdot\|_\infty$ norm almost surely as $\epsilon \to 0$\,. Thus, by generalized dominated convergence theorem as in \cite[Theorem 3.5]{serfozo1982convergence}, we obtain our result\,.     
\end{proof}

Using Lemma~\ref{EFinForContValue1A}, we want to prove that in the limit the value functions are close to each other\,. 
\begin{theorem}\label{EFinthmCont1A}
Suppose Assumptions (A1)-(A4) hold\,. If for any $\delta>0$, $v^{\delta}$ is a Lipschitz continuous $\delta$-optimal policy of the idealized model, i.e.,
\begin{equation*}
   \cJ_{T, I}^{v^{\delta}} \leq \cJ_{T,I}^{*} + \delta\,,
\end{equation*} with $|v^{\delta}(t, x) - v^{\delta}(s, y)| \leq M_2 \left(|t-s| + |x-y|\right)$ for some $0< M_2 \leq \hat{M}_1$\,.
Then, we have
\begin{equation*}
  \lim_{\epsilon \to 0} |\cJ_{T,LT^{\epsilon}}^{*} - \cJ_{T, I}^{*}| \leq \delta\,.  
\end{equation*}
\end{theorem}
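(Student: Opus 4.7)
The plan is to mirror the argument of Theorem~\ref{EthmCont1A}, transplanting it to the finite horizon setting via the two supporting continuity lemmas already established (Lemma~\ref{EFinforFixContConti1A} and Lemma~\ref{EFinForContValue1A}). First I would fix $\epsilon>0$ and observe that by Lemma~\ref{EFinForContValue1A} the map $U^\epsilon \mapsto \cJ_{T,T^\epsilon}^{U^\epsilon}$ is continuous on $\hat{\Uadm}_{LT^\epsilon}$ (continuous convergence on the space of Lipschitz selectors). Since $\hat{\Uadm}_{LT^\epsilon}$ is compact under the sup-norm topology (Arzel\`a--Ascoli on the uniformly Lipschitz class $\hat F_L$, restricted to the compact action space $\Act$), there exists a minimizer $U^{\epsilon*}\in\hat{\Uadm}_{LT^\epsilon}$ such that $\cJ_{T,LT^\epsilon}^{*} = \cJ_{T,T^\epsilon}^{U^{\epsilon*}}$.

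Next, extracting a subsequence (denoted the same), the Lipschitz selectors $h^{\epsilon*}$ associated to $U^{\epsilon*}$ converge uniformly on compacts to some $\hat h^*\in \hat F_L$; the corresponding control $\hat U^*_t = \hat h^*(t,X_t)$ lies in $\hat{\Uadm}_{LI}$. Applying Lemma~\ref{EFinForContValue1A} yields
\begin{equation*}
\cJ_{T,T^\epsilon}^{U^{\epsilon*}} \;\longrightarrow\; \cJ_{T,I}^{\hat U^*} \qquad \text{as } \epsilon \to 0,
\end{equation*}
while Lemma~\ref{EFinforFixContConti1A} applied to the fixed Lipschitz policy $v^\delta$ gives $\cJ_{T,T^\epsilon}^{v^\delta} \to \cJ_{T,I}^{v^\delta}$ as $\epsilon \to 0$.

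The two sides of the desired inequality are then obtained as in the discounted case. Writing the chain
\begin{equation*}
\cJ_{T,I}^{*} - \cJ_{T,T^\epsilon}^{v^{\delta}} \;\le\; \cJ_{T,I}^{*} - \cJ_{T,LT^\epsilon}^{*} \;\le\; \cJ_{T,I}^{\hat U^*} - \cJ_{T,LT^\epsilon}^{*},
\end{equation*}
the left inequality uses $v^\delta\in \hat{\Uadm}_{LT^\epsilon}$ (since its Lipschitz constant $M_2\le \hat M_1$) and the right uses $\hat U^*\in \hat{\Uadm}_{LI}\subset \Uadm_I$, so $\cJ_{T,I}^{\hat U^*}\ge \cJ_{T,I}^{*}$. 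Passing to the limit $\epsilon\to 0$, the leftmost expression converges to $\cJ_{T,I}^{*}-\cJ_{T,I}^{v^\delta}\ge -\delta$ by near optimality, while the rightmost vanishes by the two convergences above. Hence $\limsup_{\epsilon\to 0}|\cJ_{T,LT^\epsilon}^{*}-\cJ_{T,I}^{*}|\le \delta$.

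The routine parts are the dominated-convergence steps (these ride on the almost-sure uniform convergence of $X^\epsilon$ to $X$ supplied by Theorem~\ref{ThmCont1A}) and the compactness of $\hat{\Uadm}_{LT^\epsilon}$. The delicate point I expect to need care with is the Arzel\`a--Ascoli compactness for time--space Lipschitz selectors on the unbounded domain $[0,T]\times \Rd$: one should localize on compacts and invoke a diagonal extraction, then verify that the limiting function still lies in $\hat F_L$ and that the action-valued identification $\hat U^*_t = \hat h^*(t,X_t)$ is consistent with the driving noise limit, so that Lemma~\ref{EFinForContValue1A} genuinely applies. Once this identification is in place, the squeeze argument above produces the stated bound.
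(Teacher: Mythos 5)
Your proposal is correct and follows essentially the same route as the paper's proof: compactness of $\hat{\Uadm}_{LT^{\epsilon}}$ yields a minimizer, a subsequential Arzel\`a--Ascoli limit $\hat U^*\in\hat{\Uadm}_{LI}$ is extracted, and the two-sided squeeze $\cJ_{T,I}^{*}-\cJ_{T,T^{\epsilon}}^{v^{\delta}}\le \cJ_{T,I}^{*}-\cJ_{T,LT^{\epsilon}}^{*}\le \cJ_{T,I}^{\hat U^*}-\cJ_{T,LT^{\epsilon}}^{*}$ combined with Lemma~\ref{EFinforFixContConti1A} and Lemma~\ref{EFinForContValue1A} is exactly the paper's inequality (\ref{EFinforContvalue1B}) in streamlined form. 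The only cosmetic difference is that the fixed-$\epsilon$ continuity of $U^{\epsilon}\mapsto\cJ_{T,T^{\epsilon}}^{U^{\epsilon}}$ is credited in the paper to Theorem~\ref{ThmCont1A} plus dominated convergence rather than to Lemma~\ref{EFinForContValue1A}, which does not affect the argument.
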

\begin{proof}
In view of Theorem~\ref{ThmCont1A}, it is easy to see that (by the generalized dominated convergence theorem) for each $\epsilon > 0$, the map $U^{\epsilon}\to \cJ_{T, T^{\epsilon}}^{U^{\epsilon}}$ is continuous on $\hat{\Uadm}_{LT^{\epsilon}}$\,. Also, given that $\hat{\Uadm}_{LT^{\epsilon}}$ is compact, we have there exists $\hat{U}^{\epsilon *}\in \hat{\Uadm}_{LT^{\epsilon}}$ such that $\cJ_{T,T^{\epsilon}}^{\hat{U}^{\epsilon *}} = \cJ_{T, LT^{\epsilon}}^*$\,. By Arzel\'a-Ascoli, along a sub-sequence (denoting by same sequence) we have $\hat{U}^{\epsilon *} \to \bar{U}^*$ as $\epsilon \to 0$\,, where  $\bar{U}^*\in \Uadm_{LI}$\,. As we know that $v^{\delta}$ is a Lipschitz continuous $\delta$-optimal control for Ideal model, we obtain
\begin{align}\label{EFinforContvalue1B}
&\inf_{U\in \Uadm_{I}} \cJ_{T, I}^{U} \leq   \cJ_{T, I}^{\bar{U}^*} - \inf_{U^{\epsilon}\in \hat{\Uadm}_{LT^{\epsilon}}} \cJ_{T, T^{\epsilon}}^{U^{\epsilon}} + \inf_{U^{\epsilon}\in \hat{\Uadm}_{LT^{\epsilon}}} \cJ_{T, T^{\epsilon}}^{U^{\epsilon}}\nonumber \\
&\inf_{U^{\epsilon}\in \hat{\Uadm}_{LT^{\epsilon}}} \cJ_{T, T^{\epsilon}}^{U^{\epsilon}} \leq \cJ_{T, T^{\epsilon}}^{v^{\delta}} - \inf_{U\in \Uadm_{I}} \cJ_{T, I}^{U}  + \inf_{U\in \Uadm_{I}} \cJ_{T, I}^{U}\,.
\end{align}
Now, from (\ref{EFinforContvalue1B}), it follows that
\begin{equation*}
\inf_{U\in \Uadm_{I}} \cJ_{T, I}^{U} - \cJ_{T, T^{\epsilon}}^{v^{\delta}} \leq \inf_{U\in \Uadm_{I}} \cJ_{T, I}^{U} - \inf_{U^{\epsilon}\in \hat{\Uadm}_{LT^{\epsilon}}} \cJ_{T, T^{\epsilon}}^{U^{\epsilon}} \leq \cJ_{T, I}^{\bar{U}^*} - \inf_{U^{\epsilon}\in \hat{\Uadm}_{LT^{\epsilon}}} \cJ_{T, T^{\epsilon}}^{U^{\epsilon}}\,. 
\end{equation*}  
In view of near optimality of $v^{\delta}$ and Lemma~\ref{EFinforFixContConti1A}, we obtain $-\delta \leq 
\inf_{U\in \Uadm} \cJ_{T, I}^{U} - \cJ_{T, T^{\epsilon}}^{v^{\delta}}$ as $\epsilon \to 0$. From Lemma~\ref{EFinForContValue1A}, we have $ \cJ_{T, I}^{\bar{U}^*} - \cJ_{T, T^{\epsilon}}^{\hat{U}^{\epsilon *}} \to 0$ as $\epsilon\to 0$\,. Since $\inf_{U^{\epsilon}\in \hat{\Uadm}_{LT^{\epsilon}}} \cJ_{T, T^{\epsilon}}^{U^{\epsilon}} = \cJ_{T, T^{\epsilon}}^{\hat{U}^{\epsilon *}}$, this completes the proof of the theorem\,.
\end{proof}

Next we want to show that any smooth $\delta$-optimal policy in idealized model is $2\delta$-optimal in true model for small enough $\epsilon$\,.
\begin{theorem}\label{ThmFinNearOpt1A}
Suppose Assumptions (A1)-(A4) hold. Then for any Lipschitz continuous $\delta$- optimal control $v^{\delta}$ of the idealized model, we have 
\begin{equation*} 
\cJ_{T, T^{\epsilon}}^{v^{\delta}}(x, c) \leq \cJ_{T, LT^{\epsilon}}^{*} + 2\delta\,,
\end{equation*} for small enough $\epsilon$\,.
\end{theorem}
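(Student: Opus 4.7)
The plan is to imitate the proof of Theorem~\ref{ThmNearOpt1A} verbatim in the finite horizon setting, since all the analogous ingredients (continuity for fixed Lipschitz controls, continuity of value functions, and near-optimality of a Lipschitz policy on the idealized model) have already been established in Lemma~\ref{EFinforFixContConti1A}, Lemma~\ref{EFinForContValue1A}, and Theorem~\ref{EFinthmCont1A}. There is no new rough-paths input needed beyond what these lemmas already use.

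Concretely, I would start from the triangle inequality
\begin{equation*}
|\cJ_{T, LT^{\epsilon}}^{*} - \cJ_{T, T^{\epsilon}}^{v^{\delta}}(x,c)|
\;\leq\; |\cJ_{T, LT^{\epsilon}}^{*} - \cJ_{T, I}^{*}|
\;+\; |\cJ_{T, I}^{*} - \cJ_{T, I}^{v^{\delta}}(x,c)|
\;+\; |\cJ_{T, I}^{v^{\delta}}(x,c) - \cJ_{T, T^{\epsilon}}^{v^{\delta}}(x,c)|.
\end{equation*}
Then I would bound each term in turn. For the first term, I would apply Theorem~\ref{EFinthmCont1A}, which gives $\limsup_{\epsilon\to 0}|\cJ_{T, LT^{\epsilon}}^{*} - \cJ_{T, I}^{*}| \leq \delta$. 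For the middle term, the hypothesis that $v^{\delta}$ is a Lipschitz continuous $\delta$-optimal control for the idealized model gives immediately $|\cJ_{T, I}^{*} - \cJ_{T, I}^{v^{\delta}}(x,c)| \leq \delta$. For the last term, since $v^{\delta}\in \hat F_L$ is Lipschitz in $(t,x)$, Lemma~\ref{EFinforFixContConti1A} yields $|\cJ_{T, I}^{v^{\delta}}(x,c) - \cJ_{T, T^{\epsilon}}^{v^{\delta}}(x,c)| \to 0$ as $\epsilon\to 0$.

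Combining these three estimates, for all sufficiently small $\epsilon>0$ the right hand side is at most $2\delta$ (the third term can be absorbed into an arbitrarily small overshoot, and the first term is at most $\delta+o(1)$). Rearranging yields
\begin{equation*}
\cJ_{T, T^{\epsilon}}^{v^{\delta}}(x,c) \;\leq\; \cJ_{T, LT^{\epsilon}}^{*} + 2\delta
\end{equation*}
for $\epsilon$ small enough, which is the claim.

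The only mildly subtle point, which is not really a genuine obstacle because it was already handled analogously in Theorem~\ref{ThmNearOpt1A}, is making sure that the Lipschitz constant of $v^{\delta}$ is controlled by $\hat M_1$ so that $v^{\delta}$ actually lies in $\hat F_L$; this is exactly the hypothesis $M_2\leq \hat M_1$ invoked in Theorem~\ref{EFinthmCont1A}, so Lemma~\ref{EFinforFixContConti1A} applies and $v^{\delta}$ is an admissible competitor against $\cJ_{T, LT^{\epsilon}}^{*}$. No novel estimates are required: the proof is a direct transcription of the discounted-cost argument.
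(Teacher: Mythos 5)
Your proposal matches the paper's own proof essentially verbatim: the same triangle-inequality decomposition through $\cJ_{T,I}^{*}$ and $\cJ_{T,I}^{v^{\delta}}$, with the three terms bounded by Theorem~\ref{EFinthmCont1A}, the $\delta$-optimality of $v^{\delta}$, and Lemma~\ref{EFinforFixContConti1A}, respectively. Your added remark that $v^{\delta}$ must lie in $\hat{F}_L$ (Lipschitz constant at most $\hat{M}_1$) is a reasonable clarification consistent with the hypothesis of Theorem~\ref{EFinthmCont1A}, and otherwise the argument is the same as the paper's.
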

\begin{proof}
By triangular inequality, we deduce that
\begin{equation}\label{EFinThmNearOpt1B}
|\cJ_{T, LT^{\epsilon}}^*(x, c) - \cJ_{T, T^{\epsilon}}^{v^{\delta}}(x, c)| \leq  |\cJ_{T, LT^{\epsilon}}^*(x, c) - \cJ_{T, I}^{*}(x, c)| + |\cJ_{T, I}^{*}(x, c) - \cJ_{T, I}^{v^{\delta}}(x, c)| + | \cJ_{T, I}^{v^{\delta}}(x, c) - \cJ_{T, T^{\epsilon}}^{v^{\delta}}(x, c)|\,.   
\end{equation} From Theorem~\ref{EFinthmCont1A}, we have $|\cJ_{T, LT^{\epsilon}}^*(x, c) - \cJ_{T, I}^{*}(x, c)| \leq \delta$ as $\epsilon \to 0$, and by $\delta$- optimality of $v^{\delta}$, we have $|\cJ_{T, I}^{*}(x, c) - \cJ_{T, I}^{v^{\delta}}(x, c)| \leq \delta$\,. Also, from Lemma~\ref{EFinforFixContConti1A}, we have $| \cJ_{T, I}^{v^{\delta}}(x, c) - \cJ_{T, T^{\epsilon}}^{v^{\delta}}(x, c)|\to 0$ as $\epsilon \to 0$\,. This completes the proof of the theorem\,.
\end{proof}

\subsection{Application: Systems driven by Wide-Band Noise and Approximations to the Brownian}

In this section, we present the near optimality of smooth policies in true models which are driven by Karhunen-Lo\`eve Wong-Zakai, mollified Brownian or fractional Brownian approximation of the Brownian noise\,. In particular, our true model is
\begin{equation}\label{ETMEx}
d X^{n}(t) = b(X^{n}(t), U(t))dt + \sigma(X^{n}(t))d\mathbf W^{n}(t)  
\end{equation} where $W^{n}$ is one of the above approximations of the Brownian noise\,, and the idealized model is given by (\ref{EAM})\,.

Then, from Corollary~\ref{cor:approximation-Strat}, it is easy to see that under Lipschitz continuous policies, the solution $X^n$ of (\ref{ETMEx})converges to the solution $X$ of (\ref{EAM}) in sup norm almost surely as $n\to \infty$\,. Now, following analysis as in Subsection~\ref{AnalysisDis}, we have the following near optimality result for the discounted cost case.
\begin{theorem}\label{ThmExaNearOptDis}
Suppose Assumptions (A1)-(A4) hold. Then for any Lipschitz continuous $\delta$- optimal control $v_{\delta}^*$ of the idealized model, we have 
\begin{equation*} 
\cJ_{\alpha, T^n}^{v_{\delta}^*}(x, c) \leq \cJ_{\alpha, LT^{n}}^{*} + 2\delta\,,
\end{equation*} for large enough $n$\,.    
\end{theorem}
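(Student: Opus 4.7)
The plan is to observe that Theorem~\ref{ThmExaNearOptDis} is essentially a direct specialization of Theorem~\ref{ThmNearOpt1A} once the rough-path convergence of the approximating noise has been verified, so the main task is to check that the hypotheses of the general framework of Subsection~\ref{AnalysisDis} hold with $\xi^{\epsilon}$ replaced by $W^{n}$ (with $\epsilon = 1/n$). The four propositions in Section~4 guarantee that, for each of the Karhunen-Lo\`eve, Wong-Zakai, mollified Brownian, and fractional Brownian approximations, the canonically enhanced rough path $\mathbf{W}^{n}$ converges almost surely to $\mathbf{W}^{\mathrm{Strat}}$ in the rough topology $\|\cdot\|_{\alpha, 2\alpha}$ for some $\alpha \in (1/3, 1/2)$.

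Given this rough-path convergence, the first step is to invoke Corollary~\ref{cor:approximation-Strat}: under Assumption (A1) the diffusion coefficient $\sigma$ is in $C_b^{3}$, and for any Lipschitz continuous feedback $v \in \Usm$ (in particular for the given $v_{\delta}^{*}$) the drift $x \mapsto b(x, v(x))$ is Lipschitz. Hence the solutions $X^{n}$ of (\ref{ETMEx}) driven by $\mathbf{W}^{n}$ converge almost surely in sup norm on every compact time interval to the solution $X$ of (\ref{EAM}) driven by $\mathbf{W}^{\mathrm{Strat}}$. This is the exact analog of the sup-norm convergence used in the proofs of Proposition~\ref{EforFixContConti1A} and Proposition~\ref{EForContValue1A}.

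Next, I would replay the three-term triangular decomposition from the proof of Theorem~\ref{ThmNearOpt1A}:
\begin{equation*}
|\cJ_{\alpha, LT^{n}}^{*}(x, c) - \cJ_{\alpha, T^{n}}^{v_{\delta}^{*}}(x, c)|
\;\le\;
|\cJ_{\alpha, LT^{n}}^{*} - V_{\alpha}^{I}|
+ |V_{\alpha}^{I} - \cJ_{\alpha, I}^{v_{\delta}^{*}}(x, c)|
+ |\cJ_{\alpha, I}^{v_{\delta}^{*}}(x, c) - \cJ_{\alpha, T^{n}}^{v_{\delta}^{*}}(x, c)|.
\end{equation*}
The middle term is at most $\delta$ by the $\delta$-optimality of $v_{\delta}^{*}$ in the idealized model. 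The third term tends to $0$ as $n \to \infty$ by the analog of Proposition~\ref{EforFixContConti1A}: apply generalized dominated convergence (\cite{serfozo1982convergence}, Theorem~3.5) using the uniform bound $\|c\|_{\infty} \le M$ from (A4) together with the exponential weight $e^{-\alpha t}$ as a common integrable envelope, and use the sup-norm convergence of $X^{n} \to X$ together with joint continuity of $c$. The first term is handled by the analog of Theorem~\ref{EthmCont1A}: pick an optimizer $U^{n\ast} \in \Uadm_{LT^{n}}$ (which exists since the Lipschitz class is Arzel\`a-Ascoli compact and the cost $U \mapsto \cJ_{\alpha, T^{n}}^{U}$ is continuous on it, again by Corollary~\ref{cor:approximation-Strat} plus generalized DCT), extract a subsequential limit $\hat{U}^{\ast} \in \Uadm_{LI}$, and apply the analog of Proposition~\ref{EForContValue1A} to conclude $|\cJ_{\alpha, LT^{n}}^{*} - V_{\alpha}^{I}| \le \delta$ in the limit.

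The only genuine work beyond quoting Theorem~\ref{ThmNearOpt1A} is to make sure that the almost sure rough-path convergence of $\mathbf{W}^{n}$ provided by the propositions of Section~4 is strong enough to feed into Theorem~\ref{ThmCont1A}, which requires deterministic rough-path convergence. Since convergence holds $\mathrm{P}$-a.s.\ on the whole path space, one works pathwise on the full-measure set where $\mathbf{W}^{n} \to \mathbf{W}^{\mathrm{Strat}}$ in $\|\cdot\|_{\alpha, 2\alpha}$, applies Theorem~\ref{ThmCont1A} sample-path-by-sample-path, and then takes expectation using the uniform bounds from (A2) and (A4) to justify passage to the limit. I expect this measurability/pathwise-to-expected-value transfer to be the only place requiring care; the rest is a word-for-word transcription of the proofs of Theorem~\ref{EthmCont1A} and Theorem~\ref{ThmNearOpt1A} with $\epsilon = 1/n$.
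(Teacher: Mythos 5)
Your proposal is correct and follows essentially the same route as the paper: the paper proves Theorem~\ref{ThmExaNearOptDis} precisely by noting that the almost sure rough-topology convergence of $\mathbf W^{n}$ from Section~4 feeds into Corollary~\ref{cor:approximation-Strat} to give pathwise sup-norm convergence of $X^{n}$ to $X$ under Lipschitz policies, and then by repeating the discounted-cost analysis of Subsection~\ref{AnalysisDis} (Propositions~\ref{EforFixContConti1A} and~\ref{EForContValue1A}, Theorems~\ref{EthmCont1A} and~\ref{ThmNearOpt1A}) with $\epsilon=1/n$, which is exactly your triangular decomposition plus generalized dominated convergence. Your added attention to the pathwise-to-expectation transfer via the bounds in (A2) and (A4) is consistent with, and if anything slightly more explicit than, the paper's argument.
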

Similarly, for the finite horizon case, following the analysis in Subsection~\ref{AnalysisFinite}, we have the following near optimality result\,.
\begin{theorem}\label{ThmExaNearOptFinite}
Suppose Assumptions (A1)-(A4) hold. Then for any Lipschitz continuous $\delta$- optimal control $v^{\delta}$ of the idealized model, we have 
\begin{equation*} 
\cJ_{T, T^n}^{v^{\delta}}(x, c) \leq \cJ_{T, LT^{n}}^{*} + 2\delta\,,
\end{equation*} for large enough $n$\,.    
\end{theorem}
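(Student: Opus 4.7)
The plan is to reduce Theorem~\ref{ThmExaNearOptFinite} to the abstract finite-horizon robustness machinery already developed in Subsection~\ref{AnalysisFinite}, specifically Theorem~\ref{ThmFinNearOpt1A}. The only ingredient that needs to be verified for the specific approximations $\mathbf W^n$ (Karhunen-Lo\`eve, Wong-Zakai, mollified Brownian, or fractional Brownian) is the sample path convergence of solutions of (\ref{ETMEx}) to solutions of (\ref{EAM}) in sup norm, almost surely, uniformly over Lipschitz Markov controls.

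First, I would invoke Corollary~\ref{cor:approximation-Strat}: the approximations $\mathbf W^n$ converge almost surely to $\mathbf W^{Strat}$ in the rough topology $\|\cdot\|_{\alpha,2\alpha}$. Under any Lipschitz continuous time-dependent Markov policy $v = h(t,x) \in \hat{F}_L$, assumption \hyperlink{A1}{(A1)} ensures that the composed drift $(t,x)\mapsto b(x,h(t,x))$ is Lipschitz continuous in $(t,x)$, so the continuity statement of Corollary~\ref{cor:approximation-Strat} applies with $b_1 = b_2 = b(\cdot, h(\cdot,\cdot))$ and identical initial conditions. This yields $\|X^n - X\|_\infty \to 0$ almost surely along each sample path, which is precisely the pathwise input needed to reproduce the arguments of Lemma~\ref{EFinforFixContConti1A} and Lemma~\ref{EFinForContValue1A} verbatim with $\epsilon$ replaced by $1/n$.

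Next, following the finite horizon analysis of Subsection~\ref{AnalysisFinite}, I would establish the analogs of Lemma~\ref{EFinforFixContConti1A}, Lemma~\ref{EFinForContValue1A} and Theorem~\ref{EFinthmCont1A} for the sequence $\mathbf W^n$, using in each case the generalized dominated convergence theorem of \cite{serfozo1982convergence} (applicable because $c$ and $H$ are bounded by \hyperlink{A4}{(A4)} and our assumptions on $H$). For any sequence $U^n \in \hat{\Uadm}_{LT^n}$ arising from $h^n \in \hat{F}_L$, Arzel\`a-Ascoli extracts a subsequence such that $h^n \to \hat{h}$ uniformly on compact sets with $\hat{h} \in \hat{F}_L$, and pathwise rough-path continuity then gives $X^n \to X$ in sup norm a.s.; hence the composed costs converge. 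Compactness of $\hat{\Uadm}_{LT^n}$ and continuity of $U^n \mapsto \cJ_{T,T^n}^{U^n}$ produce optimal Lipschitz policies $\hat U^{n*}$ achieving $\cJ_{T,LT^n}^*$, and the two-sided sandwich estimate used in the proof of Theorem~\ref{EFinthmCont1A} yields $\lim_n |\cJ_{T,LT^n}^* - \cJ_{T,I}^*| \leq \delta$.

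Finally, I would conclude via the triangle inequality exactly as in the proof of Theorem~\ref{ThmFinNearOpt1A}:
\begin{equation*}
|\cJ_{T,LT^n}^*(x,c) - \cJ_{T,T^n}^{v^{\delta}}(x,c)| \leq |\cJ_{T,LT^n}^*(x,c) - \cJ_{T,I}^*(x,c)| + |\cJ_{T,I}^*(x,c) - \cJ_{T,I}^{v^{\delta}}(x,c)| + |\cJ_{T,I}^{v^{\delta}}(x,c) - \cJ_{T,T^n}^{v^{\delta}}(x,c)|,
\end{equation*}
bounding the first term by $\delta$ using the Theorem~\ref{EFinthmCont1A} analog above, the second by $\delta$ using the near-optimality hypothesis on $v^{\delta}$, and the third by the Lemma~\ref{EFinforFixContConti1A} analog ($\to 0$ as $n\to\infty$). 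The only point that requires any care is a minor bookkeeping issue: checking that the Karhunen-Lo\`eve and mollified approximations, which in the excerpt are stated as converging almost surely (possibly along subsequences), can legitimately be threaded through the compactness argument of the Theorem~\ref{EFinthmCont1A} analog; this is routine because the bound in the triangle inequality depends only on $\limsup_n$, so passing to a subsequence is harmless. The hard conceptual work is entirely encapsulated in Corollary~\ref{cor:approximation-Strat}; the remainder is a faithful transcription of the proofs of Lemmas~\ref{EFinforFixContConti1A}-\ref{EFinForContValue1A} and Theorems~\ref{EFinthmCont1A}-\ref{ThmFinNearOpt1A}.
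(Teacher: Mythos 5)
Your proposal matches the paper's own argument: the paper proves this theorem precisely by invoking Corollary~\ref{cor:approximation-Strat} to get almost sure sup-norm convergence of $X^n$ to $X$ under Lipschitz policies, and then repeating the finite-horizon analysis of Subsection~\ref{AnalysisFinite} (Lemmas~\ref{EFinforFixContConti1A}--\ref{EFinForContValue1A}, Theorems~\ref{EFinthmCont1A} and \ref{ThmFinNearOpt1A}) with $\epsilon$ replaced by $1/n$, concluding with the same triangle inequality. Your additional remark on handling the subsequence issue for the mollified and Karhunen--Lo\`eve approximations is a reasonable refinement consistent with the paper's treatment.
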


\subsection{Beyond Lipschitz and Markov Controls}
We highlight a subtle aspect on the space of control policies considered in the analysis so far. In the paper, we considered Lipschitz controls in the state variable, which may not be optimal for the case with non-Brownian noise. However, for the standard theory on optimal control of diffusions, compactness-continuity analysis for the existence and approximation of optimal policies is greatly facilitated (and enriched in view of the classes of policies considered) by the relaxation of control policies in such a way that the policies are (i)
${\cal P}(\mathbb{U})$-valued, where  ${\cal P}(\mathbb{U})$ denotes the space of probability measures on $\mathbb{U}$ under the weak convergence topology (i.e., policies are randomized), and (ii) they are {\it wide-sense admissible}, meaning that the control and state processes are independent of future increments of the driving noise processes; that is the control action variables $\{U_t\}$ satisfy the following non-anticipativity condition: for $s<t\,,$ $B_t - B_s$ is independent of
$$\sF_s := \,\,\mbox{the completion of}\,\,\, \sigma(X_0, U_r, B_r : r\leq s)\,\,\,\mbox{relative to} \,\, (\sF, \mathbb{P})\,.$$ 

Such a machinery leads to very general existence results for controlled diffusions under a variety of information structures; see e.g. \cite{arapostathis2012ergodic, FlPa82, kushner1990numerical,kushner2001numerical,kushner2012weak, pradhanyuksel2023DTApprx}.

However, when the noise is not Brownian, since the future increments are possibly correlated with earlier noise realizations, the wide-sense admissible framework is not applicable for such noise processes. Furthermore, if one considers adapted policies (policies causally measurable on the noise itself), such policy spaces are known to be non-closed under weak convergence \cite{arapostathis2012ergodic}. Accordingly, we considered only Markovian policies in our analysis; which, however, turned out to lead to very general robustness properties in view of the classes of noise processes considered.

Nonetheless, for a class of near-Brownian noise processes, one can consider the relaxed control framework; in particular for the Wong-Zakai approximation, the relaxed formulation is applicable where we can consider policies which are strictly adapted to the driving noise process. In this case, it can be shown that as the process measure on $(X^{\epsilon}_s,U_s,\xi^{\epsilon})$ converges weakly, the limit process will be so that the control policy is wide-sense admissible provided that $P(d\xi^{\epsilon}_{s}|\xi^{\epsilon}_{t})$ converges to the Brownian $P(dB_{s})$ continuously (see the proof of \cite[Theorem 5.6]{YukselWitsenStandardArXiv}): if we have continuous convergence of $P(d \xi_{s}^{\epsilon}| \xi_{t}^{\epsilon})$ to $P(d B_{s})$ for any $s> t$, we have that the space of policies is closed under weak convergence\,. Now, for any sequence of policy $U^{\epsilon}\in \Uadm_{T^{\epsilon}}$ with $U^{\epsilon}\to U$, we have $U\in \Uadm_{I}$. By Skorohod's theorem, there exists a probability space in which $U^{\epsilon}(\omega) \to U(\omega)$ weakly as a $\cP(\Act \times [0, \infty))$-valued probability measure, almost surely (in $\omega$). Moreover, since $b(\cdot, U_t^{\epsilon}) = \int_{\Act}b(\cdot, \zeta)U_t^{\epsilon}(d \zeta) $ and $b(\cdot, U_t) = \int_{\Act}b(\cdot, \zeta)U_t(d \zeta)$ are uniformly Lipschitz continuous (in particular $b(\cdot, U_t^{\epsilon})$ and $b(\cdot, U_t)$ are equicontinuous and equibounded), from Theorem~\ref{ThmCont1A}, we have that the solution $X^{\epsilon}$ of (\ref{ETM}) associated to $U^{\epsilon}$ converge to the solution $X$ of (\ref{EAM}) associated to $U$. Thus, for discounted cost case: following steps as in the Subsection~\ref{AnalysisDis}, we obtain (\ref{EdisNearOpt}), and similarly for the discounted cost case following the steps as in the Subsection~\ref{AnalysisFinite} we get (\ref{EFiniteHNearOpt})\,.      


\section{Conclusion}

 In this article we presented a robustness theorem for controlled stochastic differential equations driven by approximations of Brownian motion. We showed that within the class of Lipschitz continuous control policies which are also shown to be near optimal for the idealized Brownian models among all admissible policies, an optimal solution for the Brownian idealized model is near optimal for a true system driven by a non-Brownian (but near-Brownian) noise. We note that only considering Lipschitz policies is the price we pay for robustness. 

 \subsection{Robustness to Interpretations on Integration Theories}
We also conclude with one further, perhaps more philosophical, implication of our analysis. Sometimes, it is not the noise that is incorrectly modelled but the integration theory itself that is incorrectly modelled. There are a myriad of different integration theories such as the classical It\^o and Stratonovich (and many others as noted in \cite{McShane-Belated}\cite{Escudero-1,Escudero-2}). A priori there is no reason to prefer one integration theory over another - the dynamics of the system sometimes determines which integration theory serves as the best model. Rough paths theory can also be viewed as a way of ``parameterizing" integration theories. That is, given a rough path $\mathbf X=(X,\mathbb X)$ and a function $F\in C^{2\alpha}$ the object $\widetilde{\mathbf X_{s,t}}:=(X_{s,t},\widetilde{\mathbb X_{s,t}}):=(X_{s,t},\mathbb X_{s,t}+F(t)-F(s))$ is also a rough path above $X$. Conversely, every rough path above the signal $X$ differs by the increment of some function $F\in C^{2\alpha}$. For example $\mathbf W^{Strat}=(W,\mathbb W^{Strat})=(W,\mathbb W^{Ito}+\frac{1}{2}(t-s)I),$ where $I$ is the identity matrix. 

Given two rough paths above Brownian motion, $\mathbf W_{s,t}=(W_{s,t},\mathbb W_{s,t})$ and $\widetilde{\mathbf W_{s,t}}=(W_{s,t},\mathbb W_{s,t}+F(t)-F(s))$, their rough path distance is just the $2\alpha$ H\"older norm $\|F\|_{2\alpha}$. Our analysis can handle a sequence of rough paths $\mathbf W^n$ converging to $\mathbf W^{Strat}$, where the integration theory itself is changing.

\bibliographystyle{plain}
\bibliography{bibliography,Somnath_Robustness,SerdarBibliography}

\end{document}